\newcommand{\ca}{\mathcal}
\newtheorem{defi}{Definition}[section]
\newtheorem{theo}[defi]{Theorem}
\newtheorem{lem}[defi]{Lemma}
\newtheorem{con}[defi]{Conjecture}
\newtheorem{cor}[defi]{Corollary}
\theoremstyle{remark}
\title{Some conjectures on $r$-graphs and equivalences}
\author{Yulai Ma$^1$ , Eckhard Steffen$^2$, Isaak H.~Wolf$^2$, 
Junxue Zhang$^3$ \\
	\footnotesize
	$^1$ Center for Combinatorics and LPMC, Nankai University, Tianjin 300071,  China\\
	\footnotesize
	$^2$ Department of Mathematics, Paderborn University, Warburger Str.\ 100, 33098 Paderborn,
	Germany\\
	\footnotesize
	$^3$ School of Mathematics and Statistics, Beijing Institute of Technology, Beijing, China
	\\ \footnotesize yulai.ma@upb.de, es@upb.de, isaak.wolf@upb.de, jxuezhang@163.com}
\date{}
\begin{document}
	
	\maketitle
	
	\begin{abstract}
		An $r$-regular graph is an $r$-graph, if every odd set of vertices is connected to its complement by at least $r$ edges. Seymour [On multicolourings of cubic graphs, and conjectures of Fulkerson and Tutte.~\emph{Proc.~London Math.~Soc.}~(3), 38(3): 423-460, 1979] conjectured (1) that every planar
		$r$-graph is $r$-edge colorable and (2) that every $r$-graph has $2r$ perfect matchings such that every edge is contained in precisely two of them. We study several variants of these conjectures. 
		
		A $(t,r)$-PM is a multiset of $t \cdot r$ perfect matchings of an $r$-graph $G$ such that every edge
	is in precisely $t$ of them. We show that 
		the following statements are equivalent for every $t, r \geq 1$: 
		
		\begin{enumerate} 
			\item Every planar $r$-graph has a $(t,r)$-PM.
			\item Every $K_5$-minor-free $r$-graph has a $(t,r)$-PM.
			\item Every $K_{3,3}$-minor-free $r$-graph has a $(t,r)$-PM. 
            \item Every $r$-graph whose underlying simple graph has crossing number at most $1$ has a $(t,r)$-PM. 
		\end{enumerate}
\end{abstract}
	
	{\bf Keywords:} $r$-graphs, 
	crossing number, perfect matchings, Seymour's exact conjecture, generalized Berge-Fulkerson conjecture

	\section{Introduction}
	All graphs considered in this paper are finite and may have parallel edges 
	but no loops. A graph is simple if it contains no loops and no parallel edges. The underlying simple graph of a graph $G$ is denoted by $G_s$. 

The vertex set of a graph $G$ is denoted by $V(G)$ and its edge set by $E(G)$. 
The number of parallel edges joining two vertices $u,v$ of $G$ is denoted by $\mu_G(u,v)$ and $\mu(G)=\max\{\mu_G(u,v)\colon u,v\in V(G)\}$. For two disjoint sets $X,Y\subseteq V(G)$ we denote by $E_G(X,Y)$ the set of edges with one end in $X$ and the other end in $Y$. If $Y= X^c$ we denote $E_G(X,Y)$ by $\partial_G(X)$.  In particular, when $X=\{x\}$,  we write $E_G(x,Y)$ instead of  $E_G(\{x\},Y)$.
The graph induced by $X$ is denoted by $ G[X] $. 
The number of components of a graph $G$ is denoted by $\omega(G)$.
A set of $k$ vertices, whose removal increases the number of components of a graph, is called a {\em $k$-vertex-cut}.
A graph $G$ on at least $k+1$ vertices is called {\em $k$-connected} if $G-X$ is connected for every $X\subset V(G)$ with $|X|\leq k-1$. Moreover, the graph obtained from $G$ by identifying all vertices in $X$ to a single vertex $w_X$ and deleting all resulting loops is denoted by $G/X$. If $G[X]$ is connected, we say $G/X$ is obtained from $G$ by contracting $X$.
	
A graph $H$ is a \emph{minor} of the graph $G$ if $H$ can be obtained from a subgraph of $G$ by contracting edges. Moreover, a graph $G$ is \emph{$H$-minor-free} if $H$ is not a minor of $G$. 
A graph is \emph{$r$-regular} if every vertex has degree $r$. 
An $r$-regular graph $G$ is an \emph{$r$-graph}, if $|\partial_G(X)| \geq r$ for every $X \subseteq V(G)$ of odd cardinality.  Note that  all $r$-graphs are of even order.
	
	Let $G$ be a graph and $S$ be a set. An \emph{edge-coloring} of $G$ is a mapping $f\colon E(G)\to S$. It is a 
	\emph{$k$-edge-coloring} if $|S|=k$, and it is \emph{proper} 
	if $f(e) \not = f(e')$ for any two adjacent edges $e$ and $e'$. 
	The smallest integer $k$ for which $G$ admits a proper $k$-edge-coloring 
	is the \emph{edge-chromatic number} of $G$, which is denoted by $\chi'(G)$. A \emph{matching} is a set $M\subseteq E(G)$ such that no two edges of $M$ are adjacent. Moreover, $M$ is \emph{perfect} if every vertex of $G$ is incident with an edge of $M$.
	If $\chi'(G)$ equals the maximum degree of $G$, then $G$ is \emph{class $1$}; otherwise $G$ is \emph{class $2$}. 
The \emph{crossing number} of a graph $G$ is the minimum number of crossings in a drawing of $G$ in the plane and it is denoted by $cr(G)$.

	Definitions which are not given can be found in \cite{west2021combinatorial}.
Tait \cite{Tait_1880} proved that the 4-Color-Theorem is equivalent to the statement 
that every planar 3-graph is class 1. Jaeger \cite{jaeger1980tait} showed that this statement can be extended to 3-graphs with crossing number at most $1$.

	\begin{theo} [\cite{jaeger1980tait}] \label{Thm: Jaeger_crosN 1}
        The following statements are equivalent.
		\begin{enumerate}
			\item Every planar $3$-graph is class 1. 
			\item Every $3$-graph with crossing number at most $1$ is class $1$. 
		\end{enumerate}    
	\end{theo}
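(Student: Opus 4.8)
The implication $(2)\Rightarrow(1)$ is immediate, since a planar graph has crossing number $0\le 1$. For $(1)\Rightarrow(2)$, recall that for cubic graphs being a $3$-graph is the same as being bridgeless, and ``class $1$'' means admitting a proper $3$-edge-coloring, which I shall view as a map $E(G)\to\mathbb{Z}_2^2\setminus\{0\}$ whose three values at each vertex sum to $0$. Let $G$ be a $3$-graph with $cr(G)\le 1$; if $cr(G)=0$ we are done by (1), so fix an optimal drawing with exactly one crossing. In an optimal drawing the two crossing edges are non-adjacent, so they are $e_1=x_1y_1$ and $e_2=x_2y_2$ with four distinct endpoints occurring around the crossing in the cyclic order $x_1,x_2,y_1,y_2$. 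Reconnecting the four stubs in the two planar ways gives planar cubic graphs $G_1=G-e_1-e_2+x_1x_2+y_1y_2$ and $G_2=G-e_1-e_2+x_1y_2+x_2y_1$, and the plan is to $3$-edge-color one of them by (1) and transfer the coloring back to $G$.

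The transfer requires the chosen $G_i$ to be bridgeless, and this is the first thing I would secure. A short cut-counting argument shows that if, say, $G_1$ has a bridge, then $G$ has an edge-cut of size at most $3$ containing $e_1$ and $e_2$. Such small cuts I would remove by induction on $|V(G)|$: splitting $G$ along an edge-cut of size $2$ or $3$ produces strictly smaller $3$-graphs of crossing number at most $1$, and in the $3$-cut case the two crossing edges become incident to the contracted vertex, hence adjacent, so both parts are in fact planar and are handled by (1) directly (recombining the two colorings across a $3$-edge-cut is routine, after permuting colors to agree on the cut). Thus I may assume $G$ is suitably edge-connected, in which case one of $G_1,G_2$—say $G_1$—is a planar $3$-graph and (1) yields a proper coloring $\phi_1$.

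Now set $u=\phi_1(x_1x_2)$ and $w=\phi_1(y_1y_2)$. The edges of $G_1$ other than the two new ones are exactly the edges of $G_0:=G-e_1-e_2$, and at each of $x_1,x_2,y_1,y_2$ the two incident $G_0$-edges sum to the color of the adjacent new edge. Hence $\phi_1$ extends to a proper coloring of $G$ iff the two new edges receive the same color, and the task becomes to recolor so that $u=w$. If $u=w$ we are done. If $u\ne w$, I would look at the $\{u,w\}$-Kempe subgraph, a disjoint union of even cycles: if $x_1x_2$ and $y_1y_2$ lie on different cycles, interchanging $u$ and $w$ on one of them equalizes the colors; if they lie on a common cycle $C$, deleting them splits $C$ into two alternating paths of $G_0$ whose ends pair the four stubs, and for the two ``non-diagonal'' pairings a single interchange along one path (together with coloring $e_1,e_2$, now possibly with distinct colors) produces a proper coloring of $G$.

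The remaining case, where the two paths pair the stubs diagonally as $\{x_1,y_1\}$ and $\{x_2,y_2\}$, is the genuine obstacle: no interchange along these paths equalizes the relevant partial sums. Here I would use the planarity of $G_1$ and the freedom of having two uncrossings. Under Tait's correspondence a proper coloring of $G_1$ is a $4$-coloring of its faces, the condition $u=w$ says that the two faces flanking the former crossing along $x_1x_2,y_1y_2$ receive equal colors, and the diagonal case is exactly when the four faces meeting at the former crossing are rainbow-colored. The crux is a Kempe-interchange/parity argument—using that $e_1$ and $e_2$ cross an odd number of times and that $G_1$ is drawn without crossings—showing that one can always recolor so that some pair of opposite faces around the crossing agrees, equivalently that at least one of $G_1,G_2$ admits a coloring with its two new edges equal. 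This planar recoloring, of the same flavour as the Kempe interchanges in the Four Color Theorem but constrained by the single crossing, is where I expect the main difficulty to lie; the cut reductions and the handling of parallel edges are routine by comparison.
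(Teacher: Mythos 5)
Your setup is sound and, up to the final case, matches the strategy this paper uses for the analogous (and more general) statement: the paper does not reprove Theorem \ref{Thm: Jaeger_crosN 1} (it cites Jaeger), but its proof that statement 1 implies statement 4 of Theorem \ref{Theorem-equiv-minor-free} is exactly this argument in the $(t,r)$-PM setting — uncross the crossing pair, reduce along small (tight) cuts, color the planar graph, and repair the coloring with Kempe chains. Your cut reductions, the verification that the coloring of $G_1$ extends iff $\phi_1(x_1x_2)=\phi_1(y_1y_2)$, the different-cycles case, and the one-interchange fix for the pairing $\{x_1,y_2\},\{x_2,y_1\}$ are all correct.

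The genuine gap is that you leave the ``diagonal'' case (the two alternating paths pairing $\{x_1,y_1\}$ and $\{x_2,y_2\}$) unresolved, speculating that it requires a Tait face-recoloring/parity argument of Four-Color-Theorem flavour. No such argument is needed, because this case simply cannot occur — and you already wrote down the fact that kills it. The paths $P$ and $Q$ are vertex-disjoint paths in $G_0=G-e_1-e_2$, which is drawn in the plane without crossings, and the four stubs leave the former crossing point $p$ in the alternating cyclic order $x_1,x_2,y_1,y_2$. Adding $p$ back as a planar degree-$4$ vertex joined to $x_1,x_2,y_1,y_2$ by the crossing-free half-edge segments, a path from $x_1$ to $y_1$ closes up through $p$ to a closed curve that separates $x_2$ from $y_2$; a disjoint path from $x_2$ to $y_2$ would then have to cross it, contradicting planarity (two closed curves meeting transversally only at $p$ would intersect an odd number of times). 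So the pairs $\{x_1,y_1\}$ and $\{x_2,y_2\}$ interleave and the diagonal pairing is topologically impossible; only the pairing you already handled can arise, and your proof closes. This is precisely the paper's step ``Since $G'' - (E_{G''}(x,y) \cup E_{G''}(u,v))$ is planar it follows with $(*)$ that $P_1$ and $P_2$ intersect, a contradiction'' in the proof of statement 4. With that observation inserted, your argument is complete and the Tait-correspondence machinery and the ``second uncrossing'' $G_2$ can be deleted entirely.
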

	
	Tutte's  4-flow conjecture \cite{tutte1966algebraic} states that every 
	bridgeless graph without Petersen-minor has a nowhere-zero 4-flow. For cubic graphs this conjecture is equivalent to the statement that every bridgeless 
	cubic graph without Petersen-minor is class 1. 
 As the Petersen graph has crossing number $2$,
	Theorem \ref{Thm: Jaeger_crosN 1} can be seen as a first approximation 
	to a proof of Tutte's conjecture for cubic graphs. 
	Tutte's 4-flow conjecture for cubic graphs is announced to be proved, see \cite{edwards2016three, robertson1997tutte, robertson2019excluded, thomas1999recent}.
	Furthermore, every bridgeless cubic graph with girth at least $6$ has a 
	Petersen-minor \cite{robertson2019girth}. 
	Seymour stated the following conjecture for planar $r$-graphs.

	\begin{con}[\cite{seymour1979multi,seymour1979unsolved}] \label{Conj: Seymour exact}
		For $r \geq 1$, every planar $r$-graph is class $1$. 
	\end{con}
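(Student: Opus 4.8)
The plan is to proceed by induction on $r$, using the Four Color Theorem as the anchor. The cases $r \le 2$ are immediate: a $1$-graph is a perfect matching, and a $2$-graph is a disjoint union of even cycles (the odd-cut condition forbids odd components), so both are trivially class $1$. The case $r = 3$ is exactly Tait's reformulation of the Four Color Theorem, already invoked in the text preceding Theorem~\ref{Thm: Jaeger_crosN 1}. For the inductive step I would try to peel off a single color class: given a planar $r$-graph $G$ with $r \ge 4$, find a perfect matching $M$ of $G$ such that $G - M$ is a planar $(r-1)$-graph. Then the inductive hypothesis supplies a proper $(r-1)$-edge-coloring of $G - M$, and assigning the fresh color $r$ to all of $M$ completes a proper $r$-edge-coloring of $G$.

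Reducing the problem to this peeling step isolates the real content in a single lemma, asserting that every planar $r$-graph has a perfect matching whose deletion preserves the $r$-graph structure one level down. Indeed, $G - M$ is automatically $(r-1)$-regular and planar, so the only thing that can fail is the odd-cut condition. For an odd set $X$ one has $|\partial_{G-M}(X)| = |\partial_G(X)| - |M \cap \partial_G(X)|$, and since $X$ is odd the matching $M$ must cross $\partial_G(X)$ an odd, hence positive, number of times. The inequality $|\partial_{G-M}(X)| \ge r-1$ is therefore equivalent to $|M \cap \partial_G(X)| \le |\partial_G(X)| - (r-1)$; the binding case is that of the tight cuts, namely the odd cuts with $|\partial_G(X)| = r$, where the odd positive crossing number is forced to be exactly $1$. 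So the whole burden is to produce a matching that crosses each tight cut exactly once while not crossing the slightly larger cuts too often.

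To attack this lemma I would exploit two structural facts. First, $r$-graphs have perfect matchings, and in fact every edge lies in one, because the constant vector $\tfrac{1}{r}\mathbf{1}$ satisfies every odd-cut constraint and so lies in the perfect matching polytope; this gives a supply of candidate matchings. Second, the tight cuts of an $r$-graph can be organized by a tight-cut decomposition into a laminar family, which localizes the ``cross each tight cut once'' requirement and, in the planar setting, should interact well with the dual graph: a tight cut corresponds to a dual object, and a tight-cut-respecting matching corresponds to a parity (or $T$-join) structure on the dual that the Four Color Theorem can be made to control. The strategy would be to build $M$ piece by piece along the tight-cut decomposition, using planarity together with the $r = 3$ base case to handle the indecomposable blocks.

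The main obstacle is the peeling lemma itself. Maintaining the $r$-graph property under deletion of a matching is a global condition: interlocking tight cuts can obstruct any greedy or purely local construction, and no color-free argument can succeed, since peeling all the way down to $r = 2$ would yield an elementary proof of the planar edge-coloring phenomenon equivalent to the Four Color Theorem. Thus the full strength of the Four Color Theorem must enter at the base of the induction, and the entire difficulty is concentrated in showing that one good matching can always be extracted. I expect this tight-cut-controlled matching existence to be the hard part, and it is precisely here that planarity — through duality and the $r=3$ base case — must be leveraged decisively; this is the step at which Conjecture~\ref{Conj: Seymour exact} is most resistant for general $r$.
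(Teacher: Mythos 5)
The statement you are addressing is a \emph{conjecture} in this paper (Seymour's exact conjecture, 1979): the paper offers no proof of it, and it is known only for $r \le 8$ via the cited works of Guenin, Dvo\v{r}\'ak--Kawarabayashi--Kr\'a\v{l}, and Chudnovsky--Edwards--Seymour. Your proposal therefore cannot be measured against a proof in the paper, and on its own terms it is not a proof: everything is delegated to the ``peeling lemma'' (every planar $r$-graph with $r \ge 4$ contains a perfect matching $M$ such that $G-M$ is again an $(r-1)$-graph), and that lemma is never proved --- you yourself flag it as the open obstacle. The preparatory observations you make are all correct (the cases $r \le 2$; Tait's equivalence for $r=3$; the parity analysis showing that the only threat to the $r$-graph property of $G-M$ comes from cuts with $|\partial_G(X)| \le r + |M \cap \partial_G(X)| - 1$, tight cuts being the binding case; the fact that $\tfrac{1}{r}\mathbf{1}$ lies in the perfect matching polytope), but none of them touches the actual difficulty.

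The deeper problem is that the peeling lemma is not merely ``the hard part'': given the Four Color Theorem, it is \emph{equivalent} to the conjecture you are trying to prove. If every planar $r$-graph is class $1$, then any color class of a proper $r$-edge-coloring is a perfect matching whose removal leaves a planar $(r-1)$-graph (indeed one that is still class $1$); conversely, your induction shows the peeling lemma implies the conjecture. So the proposal is a reformulation, not a reduction in difficulty --- it concentrates the full strength of the conjecture into a single statement and then stops. The suggested tools (tight-cut decompositions, laminar families, plane duality and $T$-joins) are plausible heuristics, but the known proofs for $r \le 8$ already follow exactly this inductive template and require deep, $r$-specific structural arguments to extract the one good matching; no argument of this kind is known for general $r$. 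To have a proof you would need to actually construct, for an arbitrary planar $r$-graph, a perfect matching meeting every tight cut exactly once and every odd cut $\partial_G(X)$ in at most $|\partial_G(X)|-r+1$ edges, and nothing in the proposal does this.
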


The following conjecture, which naturally generalizes the edge-coloring version 
 of Tutte's $4$-flow conjecture for cubic graphs to $r$-graphs, is attributed to Seymour in \cite{dvovrak2016packing}.

	\begin{con} \label{Conj: Petersen minor}
		Let $G$ be an $r$-graph. If $G$ has no Petersen-minor, then $G$ is class 1. 
	\end{con}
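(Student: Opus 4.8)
The plan is to treat this conjecture as a refinement of Conjecture \ref{Conj: Seymour exact} and to proceed by a combination of induction on $r$ and the structure theory of graphs that exclude the Petersen graph as a minor. First I would isolate the base case: for $r=3$ the statement is exactly the edge-coloring reformulation of Tutte's $4$-flow conjecture for cubic graphs, which is announced to be proved (see the references above), so it may be taken as known, and the cases $r\in\{1,2\}$ are trivial. I would also record the containment that frames the whole problem: since the Petersen graph is non-planar and the class of planar graphs is minor-closed, every planar $r$-graph is Petersen-minor-free. Hence Conjecture \ref{Conj: Petersen minor} implies Conjecture \ref{Conj: Seymour exact}, and any proof must in particular resolve the planar case.

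For the inductive step in $r$, the natural idea is to remove a single perfect matching. If $G$ is an $r$-graph, I would look for a perfect matching $M$ such that $G-M$ is again an $(r-1)$-graph; if moreover $G-M$ is still Petersen-minor-free, one can induct and then add $M$ back as the $r$-th color class, since an $(r-1)$-edge-coloring of $G-M$ together with $M$ yields a proper $r$-edge-coloring of $G$. The two places where this argument leaks are exactly the delicate ones: such a reducing matching need not exist for a general $r$-graph --- the Petersen graph itself is a $3$-graph with no reducing matching --- and even when it exists, deleting $M$ may create a Petersen minor in a graph that had none, so the induction hypothesis may fail for $G-M$.

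To control both difficulties I would invoke the Robertson--Seymour structure theorem for Petersen-minor-free graphs, which provides a tree-decomposition whose torsos are almost embeddable in a surface of bounded genus with a bounded number of apex vertices and vortices, and then attempt to color each torso by a surface analogue of the planar argument and to glue the colorings along the small separators. The main obstacle --- and, I expect, the reason the conjecture remains open --- is precisely this gluing: the $r$-graph condition is a global parity-and-connectivity constraint that is not inherited by the torsos of a tree-decomposition, so proper $r$-edge-colorings of the individual pieces need not agree across a separator, while the apex vertices destroy the local regularity on which the coloring arguments depend. Reconciling this global condition with the local structural decomposition is where the real work lies, and since even the apex-free bounded-genus case already subsumes open instances of Conjecture \ref{Conj: Seymour exact}, I do not expect a short path around it.
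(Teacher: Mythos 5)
This statement is Conjecture \ref{Conj: Petersen minor}, an \emph{open conjecture} attributed to Seymour: the paper does not prove it and does not claim to, so there is no "paper proof" to compare against. Your proposal, appropriately, does not claim a proof either --- it is a strategy sketch that ends by conceding the conjecture remains open, and that concession is the correct conclusion. Your supporting observations are accurate: the cases $r\le 2$ are trivial; the case $r=3$ is exactly the edge-coloring form of Tutte's $4$-flow conjecture for cubic graphs, which the paper also cites as announced to be proved; planar $r$-graphs are Petersen-minor-free, so this conjecture strengthens Conjecture \ref{Conj: Seymour exact}; and the matching-removal induction does fail for the reason you give (deleting any perfect matching from the Petersen graph leaves two odd circuits, so no reducing matching exists, and in general Petersen-minor-freeness is not preserved under edge deletion in the relevant sense).

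It is worth noting how your proposed program relates to what the paper actually accomplishes. Theorem \ref{Theorem-equiv-minor-free} is precisely the tractable fragment of your structure-theoretic approach: instead of the Robertson--Seymour structure theorem for Petersen-minor-free graphs (with its apices and vortices), the paper restricts to the subclasses of $K_5$-minor-free and $K_{3,3}$-minor-free $r$-graphs, where Wagner's theorem (Theorem \ref{Theorem-Wagner-character}) gives an \emph{exact} decomposition into clique-sums of order at most $3$ over planar graphs, $V_8$, and $K_5$. The gluing obstacle you single out --- that the $r$-graph condition is a global parity/connectivity constraint not inherited by the pieces of a decomposition --- is exactly the issue the paper resolves in that restricted setting, via non-trivial tight-cut reductions (Theorem \ref{Thm: min counterexample}), the vertex-cut Lemmas \ref{Lemma-2-vertex-cut} and \ref{Lemma-3-vertex-cut}, and the lifting Theorem \ref{theo:r-graph_lifting}, which repairs $r$-regularity and the odd-cut condition after contracting across a small separator. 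Even with these tools the paper obtains only equivalences of the weaker $(t,r)$-PM statements with the planar case, not unconditional colorings; this supports your judgment that the full conjecture, which would require handling general Petersen-minor-free structure, is out of reach of these methods.
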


Already in 1984, Ellingham \cite{ellingham1984petersen} 
studied specific Petersen-minor-free regular graphs.

	\begin{theo}[\cite{ellingham1984petersen}] \label{Thm: Ellingham84}
		Let $r \geq 3$ and $G$ be an $r$-regular graph with a 2-factor consisting of two
		circuits $A$ and $B$, both chordless in $G$. If $G$ has no Petersen-minor, then $G$ is class $1$. 
	\end{theo}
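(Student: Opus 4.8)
The plan is to use the chordlessness of $A$ and $B$ to peel off a regular bipartite graph, to reduce the regularity $r$ by induction, and to invoke the Petersen hypothesis only in the resulting cubic case. First, since $A$ and $B$ are chordless, every edge of $G$ outside the $2$-factor $A\cup B$ joins $V(A)$ to $V(B)$. Hence $H:=G-E(A)-E(B)$ is bipartite with parts $V(A)$ and $V(B)$, and because each vertex has degree $r$ in $G$ and degree $2$ on the $2$-factor, $H$ is $(r-2)$-regular. A regular bipartite graph of positive degree has parts of equal size, so $|V(A)|=|V(B)|=:n$; thus $A$ and $B$ are $n$-circuits of the same parity.

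If $n$ is even, then both $A$ and $B$ are even, so $A\cup B$ splits into two perfect matchings, while by König's edge-colouring theorem the $(r-2)$-regular bipartite graph $H$ splits into $r-2$ perfect matchings; together this is a proper $r$-edge-colouring, and the Petersen hypothesis is not even needed. So assume $n$ is odd. For $r\ge 4$, König's theorem yields a perfect matching $N\subseteq E(H)$, which is also a perfect matching of $G$. Then $G-N$ is $(r-1)$-regular, still has $A\cup B$ as a $2$-factor with $A$ and $B$ chordless, and, being a subgraph of $G$, has no Petersen minor. By induction on $r$ it is class $1$, and assigning $N$ a fresh colour gives $\chi'(G)=r$. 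This reduces the statement to the base case $r=3$.

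In the cubic case $G$ is two odd chordless circuits $A,B$ joined by the perfect matching $M=E(G)\setminus(E(A)\cup E(B))$, with $n\ge 3$. Since there are $n\ge 3$ pairwise disjoint $M$-edges between $V(A)$ and $V(B)$, deleting any one leaves $G$ connected, so $G$ is bridgeless. I use the standard fact that a cubic graph is class $1$ if and only if it has a $2$-factor whose circuits are all even; equivalently, I must find a perfect matching $M'$ with $G-M'$ a disjoint union of even circuits. The given $M$ fails, as $G-M=A\cup B$ is a union of two odd circuits. The plan is an induction on $n$: whenever two consecutive rungs $a_ib_i$ and $a_{i+1}b_{i+1}$ of $M$, together with circuit-edges of $A$ and $B$, bound a $4$-circuit, I contract this ladder segment to obtain a smaller cubic graph of the same type; as this reduced graph is a minor of $G$, it is again Petersen-minor-free, so I may colour it inductively and lift the colouring.

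The main obstacle is the irreducible case, in which no such ladder occurs and $M$ is everywhere ``twisted'' relative to the cyclic orders of $A$ and $B$. The heart of the argument is to show that this forced twisting displays the Petersen graph as a minor: one contracts suitable subpaths of $A$ and $B$ to two $5$-circuits joined in the pentagram pattern of the Petersen graph, contradicting the hypothesis. Selecting the branch sets, verifying that every non-reducible twist pattern genuinely produces a Petersen minor, and checking that the ladder contractions preserve both cubicity and the two-odd-circuit structure is the delicate part; everything else is the bookkeeping supported by the two structural reductions above. (Alternatively, since $G$ is bridgeless and cubic, the cubic core is now also immediate from the proven edge-colouring form of Tutte's $4$-flow conjecture, i.e.\ Conjecture~\ref{Conj: Petersen minor} for $r=3$; but the direct minor extraction is what makes the present argument self-contained.)
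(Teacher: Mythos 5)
This theorem is quoted from Ellingham's paper and not proved in the present manuscript, so the comparison can only be against the statement itself; judged on its own terms, your attempt has a genuine gap. The first half is fine: chordlessness makes $H=G-E(A)-E(B)$ an $(r-2)$-regular bipartite graph between $V(A)$ and $V(B)$, the even-$n$ case follows from K\"onig's theorem alone, and peeling off perfect matchings of $H$ correctly reduces the odd-$n$ case to $r=3$. But the cubic case is the entire substance of Ellingham's theorem, and there your argument stops being a proof. Two unproven claims carry all the weight: (a) that a ``ladder'' (two consecutive rungs bounding a $4$-circuit) can be contracted to a smaller cubic graph \emph{of the same type} with a liftable colouring, and (b) that every ladder-free configuration exhibits a Petersen minor. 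For (a), the reduction you describe shortens each circuit by one, flipping the parity of $n$, so the reduced graph is not of the same type; this may still be usable (the even case is already solved), but then the induction as stated is wrong and the lifting of the $3$-edge-colouring across the contraction is never specified. For (b), the dichotomy ``ladder or Petersen minor'' is exactly the delicate case analysis that constitutes Ellingham's proof, and you explicitly defer it (``verifying that every non-reducible twist pattern genuinely produces a Petersen minor\ldots is the delicate part''). It is also far from obviously true as stated: there are many ladder-free cubic permutation graphs (e.g.\ shift-type matchings between two odd circuits) for which a Petersen minor must be extracted, or colourability established by other means, and nothing in your sketch does either.

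Your parenthetical fallback --- invoking the edge-colouring form of Tutte's $4$-flow conjecture for the bridgeless cubic case --- would logically close the gap, but only modulo a result that, as this paper itself notes, is merely announced; resting a self-contained proof of a 1984 theorem on it defeats your stated aim and leaves the direct argument incomplete. In short: the bipartite peeling and the reduction to $r=3$ are correct and worth keeping, but the cubic core, which is where the Petersen hypothesis actually does its work, is a plan rather than a proof.
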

	
If $G$ is an $r$-graph of class $1$, then every color class of an
 $r$-edge-coloring is a perfect matching of $G$. It seems to be that some multiples 
 of $r$-graphs are class $1$. Indeed, it is even conjectured that it is sufficient to replace every edge by two parallel edges.
	
	\begin{con} [\cite{seymour1979multi}] \label{Con: gen Berge Fulkerson}
		For $r \geq 1$, every $r$-graph has $2r$ perfect matchings such that every edge is in precisely two of them.
	\end{con}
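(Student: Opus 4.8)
Since Conjecture~\ref{Con: gen Berge Fulkerson} is the \emph{generalized Berge--Fulkerson conjecture}, which is open even in the cubic case $r=3$, I can only lay out the natural line of attack and say where it stalls rather than complete it. The one wholly routine reduction is to class~$2$ graphs: if an $r$-graph $G$ is class~$1$, fix a proper $r$-edge-coloring, regard its $r$ color classes as perfect matchings, and take each of them twice. The resulting multiset of $2r$ perfect matchings covers every edge exactly twice, i.e.\ it is a $(2,r)$-PM in the terminology above. Thus the conjecture is trivial for class~$1$ $r$-graphs, and (granting Conjecture~\ref{Conj: Seymour exact}) in particular for planar ones; the entire difficulty is concentrated on the class~$2$ $r$-graphs, the smallest example being the Petersen graph.

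The second, still standard, step is to pass to the fractional relaxation. For an $r$-graph the vector $\tfrac1r\mathbf{1}$ on $E(G)$ satisfies the degree equalities $\sum_{e\ni v}\tfrac1r=1$ and, because $|\partial_G(X)|\ge r$ for every odd $X$, the odd-cut inequalities $\sum_{e\in\partial_G(X)}\tfrac1r\ge 1$; hence $\tfrac1r\mathbf{1}$ lies in Edmonds' perfect-matching polytope. It is therefore a convex combination $\tfrac1r\mathbf{1}=\sum_i\lambda_i\chi^{M_i}$ of perfect-matching incidence vectors with rational $\lambda_i>0$ and $\sum_i\lambda_i=1$, so every edge is covered with total weight $\tfrac1r$. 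Clearing denominators yields a multiset of perfect matchings that covers every edge the \emph{same} number of times, which is exactly the fractional content of the conjecture, and in particular shows the fractional chromatic index of every $r$-graph equals $r$.

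The hard part, and here the plan genuinely breaks, is the integrality gap between this fractional cover and the required integral one of total size exactly $2r$ with each edge covered exactly twice. The perfect-matching polytope is non-integral precisely on graphs such as the Petersen graph, and no known rounding or decomposition scheme turns an arbitrary fractional perfect-matching cover into a $(2,r)$-PM. This gap is the whole substance of the Berge--Fulkerson conjecture and is wide open, so I would not attempt a direct proof. The realistic program---the one pursued in the rest of this paper---is instead to reduce the conjecture, and its $(t,r)$ variants, to restricted graph classes and to prove equivalences among these variants, thereby isolating the combinatorial core rather than resolving it.
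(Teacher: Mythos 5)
This statement is Conjecture~\ref{Con: gen Berge Fulkerson}, which the paper presents as an open conjecture of Seymour and does not prove, so there is no proof of the paper to compare against, and you were right not to claim one. Your two routine observations are both correct and consistent with the paper's framing: a class-$1$ $r$-graph trivially has such a family (double the color classes of a proper $r$-edge-coloring), and since $\tfrac{1}{r}\mathbf{1}$ lies in the perfect matching polytope, clearing denominators yields a $(t,r)$-PM for some $t$ depending on the graph --- which, note, does not settle Conjecture~\ref{Conj: gen BF-Conj weak - weak} either, since there $t_r$ must be uniform over all $r$-graphs of degree $r$.
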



We are going to combine these conjectures.  
A $( t,r )$-PM of an $ r $-graph $ G $ is a multiset of $ t \cdot r $ perfect matchings $ M_1,\ldots,M_{tr} $ of $ G $ such that every edge of $ G $ is contained in exactly $ t $ of  them. Trivially, if 
$G$ is class 1, then it has a $(t,r)$-PM for every $t \geq 1$.
The following two conjectures are weak versions of Conjecture \ref{Con: gen Berge Fulkerson}.

\begin{con} \label{Conj: gen BF-Conj weak - strong}
 There is a $t \geq 1$ such that for all $r \geq 1$ every $r$-graph has a 
 $(t,r)$-PM.   
\end{con}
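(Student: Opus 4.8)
A natural way to attack Conjecture~\ref{Conj: gen BF-Conj weak - strong} is through the linear-programming description of perfect matchings. For an $r$-graph $G$ consider the uniform edge weight $x=\tfrac1r\mathbf 1\in\R^{E(G)}$. Since $G$ is $r$-regular we have $x(\partial_G(\{v\}))=1$ for every vertex $v$, and since $G$ is an $r$-graph we have $x(\partial_G(S))\ge 1$ for every odd $S$. By Edmonds' perfect matching polytope theorem, $x$ therefore lies in the perfect matching polytope of $G$, so it is a convex combination $x=\sum_i\lambda_i\chi_{M_i}$ of incidence vectors of perfect matchings $M_i$. As $G$ is finite and the polytope is rational, the $\lambda_i$ may be chosen rational; picking a common denominator that is a multiple of $r$ and clearing it turns this identity into a multiset of $t_G\cdot r$ perfect matchings covering every edge exactly $t_G$ times, i.e.\ a $(t_G,r)$-PM. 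Hence every single $r$-graph already admits a $(t,r)$-PM for \emph{some} $t$, and the whole content of the conjecture is that one value of $t$ can be made to serve all $r$ and all $r$-graphs simultaneously.

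The plan is then to bound the denominator $t_G$ by a universal constant: I would try to represent $\tfrac1r\mathbf 1$ by a convex combination of perfect matchings whose coefficients all lie in $\tfrac1{tr}\Z$ for a fixed $t$, independent of $G$ and $r$. First I would reduce to connected and then to highly connected $r$-graphs, verifying that $1$- and $2$-edge-cuts together with parallel-edge reductions preserve a common value of $t$, so that only ``irreducible'' $r$-graphs remain. Next I would attempt a degree-reduction relating general $r$ to smaller regularities — splitting a degree-$r$ vertex into a small gadget of low-degree vertices — in such a way that a balanced perfect-matching cover of the reduced graph can be lifted back to $G$ with controlled multiplicity, so that a uniform $t$ for small $r$ propagates upward.

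The main obstacle is exactly this uniformity, and I expect it to be the genuinely hard part. Even $r=3$ is unresolved: a $(2,3)$-PM is precisely a Fulkerson cover, so Conjecture~\ref{Con: gen Berge Fulkerson} for $r=3$ is the Berge--Fulkerson conjecture, and already the weaker assertion that some fixed $t$ suffices for all cubic graphs is open. Consequently no universal denominator bound for the representation of $\tfrac1r\mathbf 1$ is presently available, which is why the statement is posed as a conjecture rather than proved. A realistic intermediate target — and the step I would actually carry out — is to establish the conjecture for families in which the perfect-matching structure is controlled, such as $r$-graphs possessing a chordless $2$-factor as in Theorem~\ref{Thm: Ellingham84}, or the planar and bounded-crossing-number classes, and then to look for a reduction showing that a uniform $t$ on such a family already forces a uniform $t$ in general.
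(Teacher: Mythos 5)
The statement you were asked about is a \emph{conjecture}, and the paper offers no proof of it; it only records the implications Conjecture~\ref{Con: gen Berge Fulkerson} $\Rightarrow$ Conjecture~\ref{Conj: gen BF-Conj weak - strong} $\Rightarrow$ Conjecture~\ref{Conj: gen BF-Conj weak - weak}. Your proposal correctly refrains from claiming a proof, and the part you do prove is sound: by Edmonds' perfect matching polytope theorem the vector $\tfrac1r\mathbf 1$ lies in the perfect matching polytope of any $r$-graph (the degree constraints hold by $r$-regularity, the odd-cut constraints by the $r$-graph condition), rational coefficients can be extracted, and clearing a common denominator divisible by $r$ yields a $(t_G,r)$-PM. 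Two cautions on how far this reaches: first, $t_G$ depends on the graph, so this does not even establish Conjecture~\ref{Conj: gen BF-Conj weak - weak}, which already requires a $t_r$ uniform over the infinitely many $r$-graphs of each degree $r$; second, as you say, no uniform denominator bound is known even for $r=3$, so the ``hard step'' of your plan is precisely the open problem and cannot currently be carried out.

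Since there is no proof in the paper to compare against, the fair comparison is with what the paper proves \emph{instead}, and here your closing paragraph is well aligned. The paper's contribution is Theorem~\ref{Theorem-equiv-minor-free}, showing that the planar, $K_5$-minor-free, $K_{3,3}$-minor-free, and crossing-number-at-most-$1$ instances of the $(t,r)$-PM problem stand or fall together, plus Theorem~\ref{Thm: min counterexample}, showing that a minimum counterexample to any of these conjectures is $3$-connected with no non-trivial tight edge-cut. Your suggested preliminary reductions differ slightly in mechanism: you propose reducing along $1$- and $2$-edge-cuts and parallel edges, whereas the paper's reductions go through \emph{tight} edge-cuts (splitting along a non-trivial tight edge-cut and recombining the two $(t,r)$-PMs, via Theorem~\ref{theo:r-graph_lifting} and Lemmas~\ref{Lemma-2-vertex-cut} and~\ref{Lemma-3-vertex-cut}); tight cuts are the right notion here because small even edge-cuts can occur in $r$-graphs for even $r$ and need not help, while a tight odd cut splits an $r$-graph into two smaller $r$-graphs whose covers merge cleanly. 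Your proposed degree-reduction gadget (splitting degree-$r$ vertices to propagate a uniform $t$ from small $r$ upward) has no counterpart in the paper and is, as far as is known, unsupported: no such lifting with controlled multiplicity is available, and this is where your plan would stall.
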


\begin{con} \label{Conj: gen BF-Conj weak - weak}
For every $r \geq 1$ there is a $t_r \geq 1$ such that every $r$-graph has a $(t_r,r)$-PM.   
\end{con}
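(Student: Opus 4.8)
The plan is to recast the existence of a $(t,r)$-PM as a statement about the perfect matching polytope, dispose of a per-graph version immediately, and then confront what I take to be the real content of the conjecture: making the multiplicity $t$ \emph{uniform} over all $r$-graphs for a fixed $r$. Write $\mathbf{1}\in\{0,1\}^{E(G)}$ for the all-ones vector and, for a matching $M$, let $\chi^{M}\in\{0,1\}^{E(G)}$ be its characteristic vector. A multiset $M_1,\dots,M_{tr}$ is a $(t,r)$-PM of $G$ exactly when $\sum_{j}\chi^{M_j}=t\,\mathbf{1}$. Because every perfect matching of $G$ has $\tfrac12|V(G)|$ edges while $|E(G)|=\tfrac12 r|V(G)|$, an incidence count shows that \emph{any} representation $t\,\mathbf{1}=\sum_{M}a_M\,\chi^{M}$ with $a_M\in\Z_{\ge 0}$ uses exactly $\sum_{M}a_M=tr$ matchings. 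Hence $G$ has a $(t,r)$-PM if and only if $t\,\mathbf{1}$ lies in the integer cone generated by the perfect matchings of $G$; equivalently, letting $tG$ denote the graph obtained from $G$ by replacing each edge with $t$ parallel edges, if and only if $tG$ is class $1$.

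Now the $r$-graph hypothesis enters through Edmonds' perfect matching polytope theorem. Setting $x(\partial_G(S))=\sum_{e\in\partial_G(S)}x_e$, the polytope is $P(G)=\{x\ge 0:\ x(\partial_G(\{v\}))=1\ \text{for all }v,\ x(\partial_G(S))\ge 1\ \text{for all odd }S\}$. The point $\tfrac1r\mathbf{1}$ satisfies every constraint: the degree equalities hold since $G$ is $r$-regular, and $x(\partial_G(S))=|\partial_G(S)|/r\ge 1$ for odd $S$ is precisely the defining inequality of an $r$-graph. Thus $\tfrac1r\mathbf{1}\in P(G)$, so it is a convex combination of perfect matchings with rational coefficients; clearing their common denominator $q=q(G)$ produces a $(q,r)$-PM. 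This proves that every $r$-graph admits \emph{some} $(t_G,r)$-PM and, incidentally, that its fractional chromatic index equals $r$.

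The substance of the conjecture is to replace $t_G$ by a bound $t_r$ depending on $r$ alone, and this is the step I expect to be the main obstacle. The difficulty is intrinsic rather than technical: the perfect matching polytope fails the integer decomposition property --- for the Petersen graph $\tfrac13\mathbf{1}\in P$ yet $\mathbf{1}$ is not a sum of three perfect matchings --- so no single dilation works trivially, and controlling the least admissible dilation \emph{uniformly} across an infinite family of graphs is exactly the phenomenon that Conjectures \ref{Con: gen Berge Fulkerson} and \ref{Conj: gen BF-Conj weak - strong} measure. I would pursue two complementary lines. First, a denominator bound: observe that $\tfrac1r\mathbf{1}$ lies in the relative interior of the face of $P(G)$ cut out by the degree equalities together with the \emph{tight} odd cuts (those $S$ with $|\partial_G(S)|=r$), so every perfect matching occurring in any decomposition of $\tfrac1r\mathbf{1}$ is a vertex of that integral face; one then tries to bound --- by a function of $r$ only --- the least $t$ for which $t\,\mathbf{1}$ is a nonnegative integer combination of these matchings, e.g.\ through Lov\'asz's matching-lattice theory or an ear-decomposition/brick-and-brace reduction to indecomposable $r$-graphs. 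Second, a chromatic-index bound: since the fractional chromatic index of $tG$ is $tr$, the Goldberg--Seymour theorem gives $\chi'(tG)\le tr+1$, after which it remains to exclude the class-$2$ case $\chi'(tG)=tr+1$ for a uniform choice of $t$. In either approach one would also try to reduce the general problem --- for instance restricting to simple $r$-graphs, passing through $2$-cuts and contractions, or reducing large $r$ to the cubic case by local vertex-expansions preserving the $r$-graph property. The recurring crux is to convert the fractional certificate $\tfrac1r\mathbf{1}\in P(G)$ into an integral one whose dilation does not grow with $|V(G)|$; establishing this even for a single $r\ge 3$ would already be a substantial step toward the generalized Berge--Fulkerson Conjecture \ref{Con: gen Berge Fulkerson}.
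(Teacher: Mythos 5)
You were handed a statement that the paper itself presents as an open conjecture, and the paper contains no proof of it: the only thing proved about Conjecture \ref{Conj: gen BF-Conj weak - weak} in the paper is Theorem \ref{Thm: min counterexample}, namely that a minimum counterexample must be a $3$-connected $r$-graph with no non-trivial tight edge-cut. Your proposal is honest about this, and the part you do prove is correct: the point $\tfrac1r\mathbf{1}$ satisfies Edmonds' degree equalities by $r$-regularity and the odd-cut inequalities precisely by the $r$-graph condition, a rational point of a rational polytope is a rational convex combination of its (integral) vertices, and clearing denominators yields a $(t_G,r)$-PM for some $t_G$ depending on $G$. Your incidence count showing that any nonnegative integer decomposition of $t\,\mathbf{1}$ into perfect matchings uses exactly $tr$ of them, and the reformulation as ``$tG$ is class $1$,'' are likewise sound.

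The gap is exactly the one you name: nothing in the argument bounds $t_G$ by a function of $r$ alone. The denominator $q(G)$ coming from the convex decomposition is controlled at best by Cramer-type bounds on the constraint system, whose size grows with $|V(G)|$, so no uniform $t_r$ emerges; and neither of your two programmatic lines closes this. The Goldberg--Seymour bound $\chi'(tG)\le tr+1$ holds for every $t$, so excluding the class-$2$ value $tr+1$ for some fixed $t$ is not a residual technicality but a restatement of the conjecture itself (the Petersen graph already shows $t=1$ fails for $r=3$, and no fixed $t$ is known even there). Lov\'asz's matching-lattice theory gives $\mathbf{1}$ as an \emph{integer} combination of perfect matchings, but with possibly negative coefficients, which does not produce a $(t,r)$-PM. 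Your suggested reductions through tight cuts and small vertex-cuts are precisely what the paper formalizes (Lemma \ref{Lemma-2-vertex-cut}, Theorem \ref{Thm: min counterexample}), but these only constrain a minimal counterexample --- they do not eliminate it. So your text should be read as a correct proof of the strictly weaker, classical per-graph statement (equivalently, that every $r$-graph has fractional chromatic index $r$) together with an accurate map of the open terrain, not as a proof of Conjecture \ref{Conj: gen BF-Conj weak - weak}.
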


Clearly, Conjecture \ref{Con: gen Berge Fulkerson} implies Conjecture \ref{Conj: gen BF-Conj weak - strong}, which implies Conjecture
\ref{Conj: gen BF-Conj weak - weak}. 
The reductions of these two conjectures to planar $r$-graphs or 
to Petersen-minor-free $r$-graphs are also open.

\begin{con} \label{Conj: combination1}
1. There is $t \geq 1$ such that for all $r \geq 1$ every planar $r$-graph has a $(t,r)$-PM.\\
2. For every $r \geq 1$ there exists $t_r \geq 1$ such that every 
    planar $r$-graph has a $(t_r,r)$-PM.
\end{con}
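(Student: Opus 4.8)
The plan is to first recast the existence of a $(t,r)$-PM as an ordinary edge-coloring statement, which turns both parts of the conjecture into questions about a concrete family of planar graphs. For a graph $G$ and an integer $t\ge 1$, let $tG$ denote the multigraph obtained from $G$ by replacing each edge with $t$ parallel copies. If $G$ is a planar $r$-graph then $tG$ is again planar, is $tr$-regular, and satisfies $|\partial_{tG}(X)| = t\,|\partial_G(X)| \ge tr$ for every odd $X$, so $tG$ is a planar $tr$-graph. I would then prove the clean equivalence: $G$ has a $(t,r)$-PM if and only if $tG$ is class $1$. Indeed, a proper $tr$-edge-coloring of the $tr$-regular graph $tG$ splits its edges into $tr$ perfect matchings, each of which projects to a perfect matching of $G$ (two parallel copies of one edge are adjacent, hence cannot share a matching), and each edge of $G$, having $t$ copies that receive $t$ distinct colors, lands in exactly $t$ of these matchings; the reverse assignment (color the $t$ copies of $e$ by the $t$ matchings containing $e$) is equally direct. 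Thus Conjecture \ref{Conj: combination1} becomes: is there a multiplier $t$ (uniformly in $r$, resp.\ one $t_r$ per $r$) so that $tG$ is class $1$ for every planar $r$-graph $G$?

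Next I would record the fractional input. By Edmonds' description of the perfect matching polytope $PM(G)$, the constant vector $\frac1r\mathbf{1}$ lies in $PM(G)$ for every $r$-graph: it is nonnegative, it meets each degree constraint since $G$ is $r$-regular, and it meets each odd-cut constraint since $|\partial_G(X)|\ge r$. Hence $\frac1r\mathbf{1}$ is a convex combination of perfect matchings with rational coefficients, and clearing denominators shows that each individual planar $r$-graph $G$ admits a $(t_G,r)$-PM for some $t_G$ (equivalently, $t_G\,G$ is class $1$). The entire difficulty is therefore to make the multiplier uniform: part $2$ asks for one bound $t_r$ valid over the whole planar family at fixed $r$, and part $1$ asks for a single $t$ valid for all $r$ simultaneously.

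For part $2$ I would try to bound the denominators in these convex combinations using planarity. A natural starting point is that every $r$-graph satisfies $\chi'(G)\le r+1$ (a consequence of the Goldberg--Seymour theorem, since the fractional chromatic index is $\chi'_f(G)=r$), so each $tG$ is at worst class $2$ with $\chi'(tG)\le tr+1$, and the task is to absorb the single surplus color by choosing $t$ well. That multiplication can genuinely erase this surplus is already visible for the Petersen graph $P$: it is a class $2$ $3$-graph, yet its six perfect matchings cover every edge exactly twice, so $2P$ is class $1$ and $P$ has a $(2,3)$-PM. For planar inputs the goal is a structural argument — most plausibly a discharging or reducible-configuration induction adapted to the multigraph and odd-cut setting — establishing that one fixed multiplier $t_r$ handles every planar $r$-graph at once.

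The hard part will be exactly this uniformity, which places Conjecture \ref{Conj: combination1} at the present frontier. Note that Seymour's Conjecture \ref{Conj: Seymour exact} would give both parts with $t=1$, while the Berge--Fulkerson Conjecture \ref{Con: gen Berge Fulkerson} would give part $1$ with $t=2$; so any honest proof must either settle one of these or supply a genuinely weaker mechanism producing a uniform (possibly large) multiplier. The Petersen example also shows that no purely polytopal argument can succeed: the perfect matching polytope fails the integer-decomposition property for general $r$-graphs, so planarity must be used in an essential, structural way rather than through the fractional relaxation alone. Producing that structural control — a uniform denominator bound for the perfect matching polytope of planar $r$-graphs — is the step where I would concentrate the effort and the one I expect to resist a short proof.
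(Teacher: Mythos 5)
You have not proved the statement, and you are candid about that; but it is worth being precise about where the gap sits and how your framing relates to the paper. Your preparatory steps are correct: $tG$ is a planar $tr$-graph whenever $G$ is a planar $r$-graph; $G$ has a $(t,r)$-PM if and only if $tG$ is class $1$ (the color-classes of a proper $tr$-edge-coloring of the $tr$-regular graph $tG$ project to perfect matchings of $G$ covering each edge exactly $t$ times, and conversely); and $\frac{1}{r}\mathbf{1}$ lies in the perfect matching polytope of every $r$-graph by Edmonds' odd-cut description, so each individual $G$ admits a $(t_G,r)$-PM after clearing denominators. But that last fact is exactly why Conjecture \ref{Conj: combination1} is phrased with a \emph{uniform} $t$ (resp.\ $t_r$): the per-graph existence of $t_G$ is folklore and carries no information about the conjecture. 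Your Goldberg--Seymour bound $\chi'(tG)\le tr+1$ holds for every $t$ and therefore can never, by itself, certify class $1$ for any $t$; and the proposed mechanism for uniformity --- ``a discharging or reducible-configuration induction'' --- is named but never constructed. So the proposal establishes nothing beyond the reformulation, and the single step where all the content lives is missing.

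This is not a defect you could have repaired by reading more carefully: the statement is a \emph{conjecture} in the paper, and the paper does not prove it either. What the paper does instead is prove relative results around it: Theorem \ref{Theorem-equiv-minor-free} shows that, for each fixed pair $(t,r)$, the planar case of the conjecture is equivalent to the $K_5$-minor-free, the $K_{3,3}$-minor-free, and the crossing-number-at-most-one cases (via clique-sum decompositions, tight-edge-cut contractions, and a Kempe-chain argument), and Theorem \ref{Thm: min counterexample} shows any minimum counterexample is $3$-connected without non-trivial tight edge-cuts. Your class-$1$ reformulation of the $(t,r)$-PM condition is compatible with, and could complement, that program; you might also have noted that since Conjecture \ref{Conj: Seymour exact} is known for $r\le 8$, statement 2 of Conjecture \ref{Conj: combination1} holds with $t_r=1$ in that range. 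One small caution: your Petersen-graph remark shows the perfect matching polytope fails the integer decomposition property in general, but since the Petersen graph is non-planar it only supports your (correct) conclusion that planarity must be used structurally; it does not rule out polytopal arguments restricted to the planar family.
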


\begin{con} \label{Conj: combination2}
    1. There is $t \geq 1$ such that for all $r \geq 1$ every Petersen-minor-free $r$-graph has a $(t,r)$-PM.\\
    2. For every $r \geq 1$ there exists $t_r \geq 1$ such that every 
    Petersen-minor-free $r$-graph has a $(t_r,r)$-PM.
\end{con}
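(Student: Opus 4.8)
The plan is to translate the existence of a $(t,r)$-PM into the geometry of the perfect matching polytope $\mathrm{PM}(G)$ and then to isolate what is genuinely open. Writing $\mathbf 1$ for the all-ones vector on $E(G)$, the defining constraints of $\mathrm{PM}(G)$ say that $\tfrac1r\mathbf 1 \in \mathrm{PM}(G)$ if and only if $\tfrac1r|\partial_G(S)|\geq 1$ for every odd $S$, i.e.\ exactly when $G$ is an $r$-graph. Since $\mathrm{PM}(G)$ is a rational polytope, $\tfrac1r\mathbf 1$ is a rational convex combination $\sum_i\lambda_i\chi^{M_i}$ of perfect matchings; clearing the common denominator $D$ shows that $G$ has a $(t,r)$-PM whenever $D\mid tr$. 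Thus every individual Petersen-minor-free $r$-graph already has a $(t,r)$-PM for some $t$ depending on $G$, and the real content of Conjecture~\ref{Conj: combination2} is the uniformity of $t$: part~2 over the class for each fixed $r$, and part~1 simultaneously over all $r$. As part~1 gives part~2 by taking $t_r=t$, I would aim at part~1 and read off part~2.

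The engine of the reduction is that tight cuts behave perfectly with respect to $(t,r)$-PMs. Call $\partial_G(S)$ a \emph{tight cut} if $|S|$ is odd and $|\partial_G(S)|=r$. Any perfect matching meets $\partial_G(S)$ in an odd, hence positive, number of edges; and in a $(t,r)$-PM $\{M_1,\dots,M_{tr}\}$ we have $\sum_j|M_j\cap\partial_G(S)|=\sum_{e\in\partial_G(S)}t=rt$, so with $tr$ matchings each contributing at least one, every $M_j$ meets $\partial_G(S)$ exactly once. Consequently, passing to the smaller $r$-graph $G/S^c$ (respectively $G/S$) sends a $(t,r)$-PM of $G$ to one of the smaller graph, and conversely two such $(t,r)$-PMs glue back to one of $G$ because each uses every cut edge exactly $t$ times, so the matchings can be paired cut-edge by cut-edge. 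Crucially this operation changes neither $t$ nor $r$, and since the pieces are minors of $G$ they remain Petersen-minor-free. First I would iterate this until no nontrivial tight cut remains, reducing part~1 to the same statement for the tight-cut-free pieces (the bricks and braces) of Petersen-minor-free $r$-graphs.

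It then remains to produce a uniform $t$ on these indecomposable Petersen-minor-free $r$-graphs. Some are already settled: a piece carrying a $2$-factor of two chordless circuits is class~$1$ by Theorem~\ref{Thm: Ellingham84}, hence has a $(1,r)$-PM. The planar indecomposable pieces are exactly the setting of Conjecture~\ref{Conj: combination1}, and here the paper's main equivalence theorem is the natural tool, letting one pass freely between the planar, $K_5$-minor-free, $K_{3,3}$-minor-free and crossing-number-$\le 1$ formulations to look for the most tractable base case; note that a uniform $t=1$ on planar pieces is precisely Seymour's exact Conjecture~\ref{Conj: Seymour exact}. The remaining pieces are the \emph{nonplanar} Petersen-minor-free bricks, which for large $r$ can be numerous and large, so they must be understood through a structural description of graphs with no Petersen minor.

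The two places where I expect this plan to stall are exactly these last base cases. First, a uniform bound on $t$ for planar $r$-graphs is already the weak planar form of the generalized Berge--Fulkerson Conjecture~\ref{Con: gen Berge Fulkerson}, open for large $r$ (the value $t=1$ being Seymour's conjecture). Second, and more seriously, there is no usable structure theorem for nonplanar Petersen-minor-free $r$-graphs when $r$ is unbounded; the only such description available is the deep one underlying the announced cubic case of Tutte's conjecture, and extracting from it a bounded denominator uniformly in $r$ is the genuine obstacle. For this reason the honest target of the approach is not the full conjecture but a reduction: the tight-cut argument already shows that Conjecture~\ref{Conj: combination2} is equivalent to its restriction to tight-cut-free Petersen-minor-free $r$-graphs, and combining it with the main theorem would reduce the planar part to any one of the equivalent minor-free or crossing-number formulations of Conjecture~\ref{Conj: combination1}.
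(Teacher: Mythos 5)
The statement you were given is Conjecture \ref{Conj: combination2}: the paper states it as an open problem and offers no proof of it, so there is no paper proof to compare against, and your proposal --- rightly --- does not claim to close it either. What you do argue is essentially correct. The polytope observation (that $\tfrac{1}{r}\mathbf{1}$ lies in the perfect matching polytope of $G$ precisely when $G$ is an $r$-graph, so each individual graph has a $(t,r)$-PM for some $G$-dependent $t$, making uniformity of $t$ the entire content of the conjecture) is sound, as is the tight-cut counting: in a $(t,r)$-PM each of the $tr$ matchings meets a non-trivial tight edge-cut exactly once, so $(t,r)$-PMs contract to and glue across such cuts without changing $t$ or $r$, and the contracted pieces remain Petersen-minor-free because they are minors of $G$. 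This is exactly the reduction the paper itself records in Theorem \ref{Thm: min counterexample}: a minimum counterexample to either part of Conjecture \ref{Conj: combination2} is a $3$-connected $r$-graph with no non-trivial tight edge-cut (the paper additionally invokes Lemma \ref{Lemma-2-vertex-cut} to dispose of $2$-vertex-cuts, a step your sketch omits but which is separate from the tight-cut equivalence). Your further remarks --- that Ellingham's Theorem \ref{Thm: Ellingham84} settles pieces with a chordless two-circuit $2$-factor, and that Theorem \ref{Theorem-equiv-minor-free} lets one move the planar base case among the four equivalent formulations --- are also consistent with the paper, which proves equivalences of \emph{restrictions} of this conjecture rather than the conjecture itself.

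The genuine gap is exactly where you flag it: neither your proposal nor the paper supplies a uniform $t$ (or $t_r$) on the tight-cut-free Petersen-minor-free pieces, in particular the nonplanar ones, for which no structure theorem usable at unbounded $r$ exists; so what you have is a correct reduction plus an honest inventory, not a proof --- which is the most that can be said for an open conjecture. One terminological caution: your tight-cut-free pieces are not literally the bricks and braces of matching theory. A non-trivial odd cut of size $r$ in an $r$-graph need not be met exactly once by \emph{every} perfect matching (in the triangular prism, the three rungs form a perfect matching meeting the triangle cut three times); it is met exactly once only by matchings belonging to a $(t,r)$-PM. The decomposition you are running is the paper's $r$-graph tight-cut reduction, not the Lov\'asz tight cut decomposition, and the two notions of tightness should not be conflated. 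With that caveat, your endpoint --- Conjecture \ref{Conj: combination2} is equivalent to its restriction to $3$-connected, tight-cut-free Petersen-minor-free $r$-graphs, with the planar part being Conjecture \ref{Conj: combination1} in any of its equivalent guises --- is an accurate assessment of the state of the art.
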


Clearly, if Conjecture \ref{Conj: Seymour exact} is true, then 
both statements of 
Conjecture \ref{Conj: combination1} are true for every $r \geq 1$. 
Similarly, if Conjecture \ref{Conj: Petersen minor} is true, then
both statements of Conjecture \ref{Conj: combination2} are true for 
every $r \geq 1$.

Every $K_4$-minor-free graph is planar and 
Conjecture \ref{Conj: Seymour exact} is confirmed for $K_4$-minor-free
$r$-graphs by Seymour \cite{seymour1990colouring}. Natural subsets of the set of Petersen-minor-free $r$-graphs are the sets of $K_5$-minor-free and of 
$K_{3,3}$-minor-free $r$-graphs. We will prove that the restriction of 
Conjecture \ref{Conj: combination2} to these sets of $r$-graphs is equivalent to Conjecture \ref{Conj: combination1}. 
The following theorem is the main result of this paper. 
	
\begin{theo}\label{Theorem-equiv-minor-free}
		For any $t \geq 1$ and $r \geq 1$, the following statements are equivalent.
		\begin{enumerate}
			\item Every planar $r$-graph has a $( t,r )$-PM.
			\item  Every $K_5$-minor-free $r$-graph has a $( t,r )$-PM.
			\item  Every $ K_{3,3} $-minor-free $r$-graph has a $( t,r )$-PM.
            \item Every $r$-graph $G$ with $cr(G_s) \leq 1$ has a $( t,r )$-PM.
		\end{enumerate}
	\end{theo}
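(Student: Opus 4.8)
The plan is to prove the theorem by establishing a chain of implications among the four statements, using the structural decomposition theorems for $K_5$-minor-free and $K_{3,3}$-minor-free graphs together with the fact that a $(t,r)$-PM is a purely local, combinatorial object that behaves well under small cuts. Since planar graphs are exactly the graphs with crossing number $0$, and every planar graph is both $K_5$-minor-free and $K_{3,3}$-minor-free, the implications $(2)\Rightarrow(1)$, $(3)\Rightarrow(1)$ and $(4)\Rightarrow(1)$ are trivial (restricting to a smaller class). The substance is to prove $(1)\Rightarrow(2)$, $(1)\Rightarrow(3)$ and $(1)\Rightarrow(4)$, i.e.\ to bootstrap from the planar case to each larger class.

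\medskip

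First I would set up the key reduction lemma: if $G$ is an $r$-graph with a small edge-cut or vertex-cut that separates $G$ into pieces, one should be able to "plug" each side into a planar (or already-handled) gadget, obtain a $(t,r)$-PM on each resulting smaller $r$-graph, and then glue the perfect matchings back together along the cut so that the multiplicities match. The cleanest case is a $k$-edge-cut with $k$ small: here one replaces the rest of the graph on each side by a suitable small $r$-graph preserving the parity and regularity conditions, applies the inductive hypothesis, and matches up how the $t\cdot r$ matchings meet the cut edges. One must check that the $r$-graph condition $|\partial_G(X)|\ge r$ is preserved under these surgeries and that the gadgets themselves are planar so the induction stays inside the class where the hypothesis is available. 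I expect an essential tool to be the observation that contractions and the operation $G/X$ preserve or respect the $r$-graph property on the relevant side, and that a $(t,r)$-PM restricted to $\partial_G(X)$ distributes the $t$ copies of each cut edge among the $tr$ matchings in a way that can be prescribed.

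\medskip

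The main engine is the Wagner-type decomposition: every $K_5$-minor-free graph is built from planar graphs and copies of the Wagner graph $V_8$ by clique-sums of order at most $3$ (the $3$-sum decomposition), and every $K_{3,3}$-minor-free graph is built from planar graphs and copies of $K_5$ by clique-sums of order at most $2$. The strategy is to induct on the number of vertices of $G$: if $G$ is planar we are done by $(1)$; otherwise the decomposition exhibits a clique-cut (of order $2$ or $3$) along which $G$ splits, and I would apply the reduction lemma across that cut, choosing the gadgets so that each piece is a strictly smaller $r$-graph that is still $K_5$-minor-free (respectively $K_{3,3}$-minor-free). The finitely many non-planar "basic" pieces ($V_8$, and $K_5$) must be handled directly: one shows each such fixed graph, when completed to an $r$-graph, admits a $(t,r)$-PM, possibly again by reducing it via a small cut to planar pieces. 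For $(1)\Rightarrow(4)$ the argument is the most geometric: a drawing with a single crossing can be "planarized" by deleting or rerouting around the crossing pair of edges, turning the crossing into a local gadget on four vertices, so that again a small-cut reduction to the planar case applies.

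\medskip

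The hard part will be the gluing step of the reduction lemma: one needs to guarantee that the prescribed distribution of the $t$ copies of each cut edge among the $tr$ perfect matchings on one side can be realized by a compatible distribution on the other side. Because clique-sums here have order up to $3$, the matchings restricted to a cut of three edges can meet the cut in several parity patterns, and the multiset of patterns produced on each side must be made to agree. I would address this by building gadgets flexible enough to realize any admissible pattern (using that the pieces are themselves $r$-graphs, so Seymour-type edge-colorings or perfect-matching structures on small planar gadgets give the needed freedom), and by a counting argument ensuring the $t$ copies balance across the cut. Controlling this interface across a $3$-edge-cut, while keeping both sides inside the relevant minor-closed class and inside the $r$-graph family, is where the real work lies; the two- and one-cut cases ($K_{3,3}$-minor-free and the crossing-number-one planarization) should follow as easier specializations of the same mechanism.
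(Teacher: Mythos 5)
Your outline matches the paper only for the trivial implications and the broad idea of inducting along Wagner-type decompositions; the three places you label as ``where the real work lies'' are precisely where your plan either stops or points in a direction that would fail. For $(1)\Rightarrow(2)$, you defer the gluing across a $3$-cut to unspecified ``flexible gadgets'' realizing prescribed matching patterns. The paper does not need any gadget, and the compatibility problem you worry about resolves itself, but only after a specific preparation you do not have: a minimal counterexample has no non-trivial tight edge-cut and is $3$-connected (Theorem \ref{Thm: min counterexample}); Lemma \ref{Lemma-3con-3components} then produces a $3$-vertex-cut $S$ with $\omega(G-S)\geq 3$ such that adding the triangle on $S$ preserves $K_5$-minor-freeness; Lemma \ref{Lemma-3-vertex-cut} together with Theorem \ref{Theorem-bipartite-class1} rules out three odd components, so exactly one component is odd and hence \emph{two} components $G_1,G_2$ are even --- this parity bookkeeping, absent from your proposal, is what makes Theorem \ref{theo:r-graph_lifting} applicable to build two strictly smaller $K_5$-minor-free $r$-graphs $G_1',G_2'$ with explicitly computed triangle multiplicities $d_i,h_i,k_i$. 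The interface then matches automatically: since $|V(G_1)|$ is even and has only three attachment vertices, every perfect matching of $G_2'$ meets $\partial_G(V(G_1))$ in $0$ or $2$ edges, and counting incidences at $u,v,w$ forces exactly $td_1$, $th_1$, $tk_1$ matchings saturating the pairs $\{u,v\}$, $\{v,w\}$, $\{w,u\}$ --- exactly what is needed to substitute for the added edges of $G_1'$. Your separate worry about ``several parity patterns'' across a $3$-edge-cut never arises. Moreover, your plan for $(1)\Rightarrow(3)$ is heavier than necessary and chases a vacuous case: a minimal counterexample is $3$-connected, hence by Corollary \ref{Coro-planarity-minor-free} its underlying simple graph is planar or $K_5$, and $K_5$ is impossible because an $r$-graph has even order; there is no ``$K_5$ completed to an $r$-graph'' to handle, and no $2$-clique-sum gluing machinery is needed.

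For $(1)\Rightarrow(4)$ your proposed mechanism is wrong, not merely incomplete: a crossing in a drawing is not a cut of the graph, so no ``small-cut reduction'' applies --- the two crossing edges $e_{xy}$ and $e_{uv}$ need not lie in any small edge- or vertex-cut, and planarizing the crossing by a degree-$4$ dummy vertex destroys $r$-regularity, so perfect matchings do not transfer. The paper's argument is of a genuinely different type: it uncrosses, replacing $e_{xy},e_{uv}$ by $f=xu$ and $f'=yv$, and inducts on $\mu_G(x,y)\mu_G(u,v)$ (after first dispatching the degenerate case of a non-trivial tight edge-cut containing $E_G(x,y)\cup E_G(u,v)$, which is also what guarantees the uncrossed graph is still an $r$-graph). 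A $(t,r)$-PM of $G'$ then needs repair exactly on the ``conflicting pairs'' of matchings containing one of $f,f'$ but not the other, and the repair is a Kempe-chain switch: since $G''-(E_{G''}(x,y)\cup E_{G''}(u,v))$ is planar, a Kempe chain from $x$ to $y$ would have to intersect one from $u$ to $v$, so the chain starting at $x$ must end at $u$ or $v$, making every conflicting pair extendable. Your proposal contains no substitute for this matching-recoloring step, and without it the crossing-number case does not reduce to statement $1$.
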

	
Conjecture \ref{Conj: Seymour exact} is proved for $r \leq 8$ in a sequel 
of papers
	\cite{7chudnovsky2015edge, 8chudnovsky2015edge, dvovrak2016packing, guenin2003packing}. 
Thus, for $r \leq 8$, every $r$-graph with no $K_5$-minor or with no $K_{3,3}$-minor or with underlying simple graph with crossing number at most 1 has a 
$(t,r)$-PM for every $t \geq 1$. 

We also show that if $H$ is a minimum counterexample to one of the aforementioned conjectures, then it is $3$-connected. 

\section{Basic results}
	
\subsection{Clique-sums}

If $H$ is a simple graph with induced subgraphs $H_1$ and $H_2$ and $S \subseteq V(H)$ 
such that 
$V(H) = V(H_1) \cup V(H_2)$, $E(H) = E(H_1) \cup E(H_2)$, and 
$S = V(H_1) \cap V(H_2)$, then we say that $H$ is obtained from $H_1$ and $H_2$
by {\em pasting} together these graphs along $S$.

Let $k \geq 1$ be an integer. A graph $G$ is a $k$-{\em clique-sum} of $G_1$ and $G_2$, denoted by $G_1 \bigoplus_k G_2$, if $G$ is obtained by pasting
together two graphs $G_1'$ and $G_2'$ along a set $S'$ of order $k$ and for 
each $i \in \{1,2\}$, $G_i$ is obtained from
$G_i'$ by extending $G_i'[S']$ to a complete graph of order $k$. 
The set $S'$ is called the vertex set with respect to the clique-sum.

Informally, the clique-sum of two graphs can be considered as gluing together two graphs along a clique and removing some edges of the clique.
We also say that the clique-sum is an operation that yields $G$ from $G_1$ and $G_2$
(which contain a clique of the same order).

\begin{lem} \label{Lemma-cut-vertex}
 Let $k \geq 1$ be an integer and $G$ be a connected graph. 
 If $G = G_1\bigoplus_{k}G_2$, then each of the following statements
 holds: 
  \begin{enumerate}
      \item $G_i$ is connected for each $i \in \{1,2\}$.
      \item For every $S \subseteq V(G_i) \colon 
      \omega(G_i-S) \leq \omega(G-S)$.
	\item If $G_i\ncong K_k$ for any $i\in \{1,2\}$, then the vertex set with respect to the clique-sum $ \bigoplus_{ k}$ is a vertex-cut of $ G $.
  \end{enumerate}
	\end{lem}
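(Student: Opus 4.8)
The plan is to exploit two structural features of the clique-sum: that $V(G_1')\cap V(G_2')=S'$, so $S'$ separates the two sides of $G$, and that $G_i$ is obtained from $G_i'$ only by adding the missing edges of the clique on $S'$. I would first record two observations that drive all three parts. First, since $G$ is the pasting of $G_1'$ and $G_2'$ along $S'$, every walk in $G$ between two vertices of $V(G_i')$ can cross into the opposite part only through vertices of $S'$. Second, every edge of $G$ joining two vertices of $V(G_i)$ is already an edge of $G_i$: if it lies in $E(G_i')$ this is clear, and if it lies in $E(G_{3-i}')$ then both its ends lie in $V(G_1')\cap V(G_2')=S'$, hence it is a clique edge of $G_i$. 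Together these let me replace any excursion of a walk into the opposite part by a single clique edge inside $G_i$.

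For statement (1) I would argue directly. Given $u,v\in V(G_i)$, connectedness of $G$ yields a $u$--$v$ path $P$ in $G$; decomposing $P$ at its visits to $S'$, each maximal subpath lying in the opposite part $G_{3-i}'$ starts and ends at vertices $s,s'\in S'$. Replacing every such subpath by the clique edge $ss'$ of $G_i$ (and deleting it when $s=s'$) turns $P$ into a $u$--$v$ walk inside $G_i$, so $G_i$ is connected; this is also the special case $S=\emptyset$ of statement (2).

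Statement (2) is the heart of the lemma and I expect it to be the main obstacle. Fix $i=1$, write $A=V(G_1)\setminus S$ and $B=V(G_2')\setminus S'$, so that $V(G-S)=A\sqcup B$ while $V(G_1-S)=A$. The key claim is that if $u,v\in A$ lie in the same component of $G-S$, then they lie in the same component of $G_1-S$. To prove it I would take a $u$--$v$ path in $G-S$ and reroute every maximal excursion into $B$ (which, by the first observation, begins and ends in $S'\setminus S\subseteq A$) through the corresponding clique edge of $G_1$; by the second observation all edges of $G-S$ inside $A$ survive in $G_1-S$, so the result is a $u$--$v$ walk in $G_1-S$. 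Granting the claim, the map sending a component $C$ of $G-S$ with $C\cap A\neq\emptyset$ to the unique component of $G_1-S$ containing $C\cap A$ is well defined and surjective, since every component of $G_1-S$ meets $A$; hence $\omega(G_1-S)\le\omega(G-S)$. The delicate point, and the reason the inequality runs in the stated direction, is that the clique completion on $S'$ makes $G_1$ at least as well connected on $A$ as all of $G$ is, so the extra vertices $B$ can never keep apart in $G-S$ two vertices that are already merged in $G_1-S$.

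For statement (3) I would compute $\omega(G-S')$ directly. Since $S'=V(G_1')\cap V(G_2')$, deleting it splits $G$ into the disjoint union of $G_1'-S'$ and $G_2'-S'$, so $\omega(G-S')=\omega(G_1'-S')+\omega(G_2'-S')$. The hypothesis $G_i\ncong K_k$ forces $V(G_i)\supsetneq S'$ for each $i$, because $G_i[S']$ is already a complete $K_k$ and so $G_i\cong K_k$ would be equivalent to $V(G_i)=S'$. Thus each of $G_1'-S'$ and $G_2'-S'$ has at least one vertex, hence at least one component, giving $\omega(G-S')\ge 2>1=\omega(G)$, so $S'$ is a vertex-cut of $G$. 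Here the only things to verify carefully are that the two sides share exactly $S'$ and the equivalence $G_i\cong K_k\iff V(G_i)=S'$, both immediate from the definition of the clique-sum.
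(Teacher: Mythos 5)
Your proof is correct, and for the two non-trivial parts it takes a genuinely different route from the paper's. For statement 2 the paper \emph{localizes the clique}: since $S'$ induces a complete graph in $G_i$, the set $S'\setminus S$ meets at most one component of $G_i-S$, so after assuming WLOG that the sum is taken on $G_i[V(G_i^1)\cup S]$ and $G_{3-i}$, the components $G_i^2,\dots,G_i^\ell$ of $G_i-S$ survive verbatim as components of $G-S$, and the nonempty remainder supplies at least one further component, yielding $\omega(G-S)\geq\ell$ by direct counting. You instead build a surjection from the components of $G-S$ meeting $A=V(G_i)\setminus S$ onto the components of $G_i-S$, by rerouting each maximal excursion of a path into $B=V(G_{3-i}')\setminus S'$ through a clique edge of $G_i$ whose ends lie in $S'\setminus S$; both arguments ultimately rest on the same two structural facts you isolate at the start (crossings between the two sides occur only in $S'$, and every edge of $G$ joining two vertices of $V(G_i)$ is an edge of $G_i$ -- where you correctly use only this direction, since $G$ need not contain all clique edges of $G_i$). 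Your version buys three things: it makes explicit the point the paper's WLOG leaves tacit (that the clique forces $S'\setminus S$ into a single component of $G_i-S$), it proves the sharper fact that the intersection of each component of $G-S$ with $A$ lies inside a single component of $G_i-S$, and it subsumes statement 1 as the case $S=\emptyset$, whereas the paper proves statement 1 separately by contradiction (a disconnected $G_1$ would make $G$ the disjoint union of $G_1-V(G_1^1)$ and $G_1^1\bigoplus_k G_2$). The paper's proof is shorter; yours is more self-contained and uniform. For statement 3 the paper simply declares the claim trivial, and your computation -- $G-S'$ is the disjoint union of $G_1'-S'$ and $G_2'-S'$, both nonempty because $G_i\ncong K_k$ is equivalent to $V(G_i)=S'$ -- is exactly the intended justification.
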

	
	\begin{proof} Statement 3 is trivial, so we just prove statements 1 and 2 below.
		
		Statement 1: Suppose to the contrary that $G_1$ is disconnected. By definition, the clique-sum operation $\bigoplus_k$ must be taken on a component of $G_1$, say $G_{1}^1$, and $G_2$. As a result, $G$ is a disjoint union of $G_1-V(G_{1}^1)$ and $G_{1}^1\bigoplus_k G_2$, which implies that $G$ is disconnected. This is a contradiction.
		
		
		Statement 2: 
		Assume that $G_i-S$ contains components $G_{i}^1,\ldots,G_{i}^\ell$, where $\ell=\omega(G_i-S)\geq 1$. Without loss of generality, we assume that the clique-sum operation $\bigoplus_k$ is taken on  $G_i[V(G_{i}^1)\cup S]$ and $G_{3-i}$.   Consequently, $G_{i}^j$ is also a component of $G-S$ for each $j\in\{2,\ldots,\ell\}$. Since 
  $G-(S \cup \bigcup\limits_{j=2}^{\ell} V(G_{i}^j))$ is non-empty, we obtain $\omega(G_i-S) \leq \omega(G-S)$.
		%
		%
		%
	\end{proof}

	
	\subsection{$K_5$-minor-free or $K_{3,3}$-minor-free graphs}
	
	\begin{theo}  [Wagner's Theorem \cite{wagner1937eigenschaft}, see also \cite{thomas1999recent}]\label{Theorem-Wagner-character} Let $ G $ be a simple graph.
 \begin{enumerate}
     \item $ G $ is planar if and only if $ G $ is $ K_5 $-minor-free and $ K_{3,3} $-minor-free.
	\item $ G $ is $ K_5 $-minor-free if and only if $ G $ can be obtained from planar graphs and the Wagner graph $V_8$ by  means of $0$-, $1$-, $2$-, $3$-clique-sums, where the Wagner graph is the $3$-regular graph as shown in Figure \ref{fig:Wagner graph}.
	\item $ G $ is $ K_{3,3} $-minor-free if and only if $ G $ can be obtained from planar graphs and $ K_5 $ by means of $0$-, $1$-, $2$-clique-sums.
  \end{enumerate}
	\end{theo}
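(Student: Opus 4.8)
The plan is to prove the three characterizations in the order stated, using Statement 1 as the foundation and isolating the genuinely hard content into two connectivity lemmas. For Statement 1 I would treat the directions separately. The forward direction is immediate: planarity is preserved under taking minors (deleting or contracting an edge of a plane graph again yields a plane graph), and since $K_5$ and $K_{3,3}$ are nonplanar, no planar graph can contain either as a minor. The reverse direction is the content of Kuratowski's theorem. I would first note that excluding $K_5$ and $K_{3,3}$ as minors is equivalent to excluding them as subdivisions: a $K_{3,3}$-minor yields a $K_{3,3}$-subdivision because $\Delta(K_{3,3})=3$, and a $K_5$-minor yields a subdivision of $K_5$ or of $K_{3,3}$. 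I would then reduce to $3$-connected graphs — a graph that is not $3$-connected is a $0$-, $1$-, or $2$-clique-sum of smaller graphs, and both planarity and the absence of the two subdivisions are preserved and reflected by such sums — and for $3$-connected $G$ argue by induction using the fact that every $3$-connected graph on at least five vertices has an edge whose contraction preserves $3$-connectivity: contract such an edge, apply induction to obtain a plane embedding (which is essentially unique by Whitney), and re-expand the edge inside that embedding, the forbidden subdivisions being exactly the obstruction to doing so.

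For the \emph{if} directions of Statements 2 and 3 the key point is closure of each class under the relevant clique-sums. I would check directly that $V_8$ has no $K_5$-minor and that $K_5$ has no $K_{3,3}$-minor (the latter because a $K_{3,3}$-minor needs six branch sets while $K_5$ has only five vertices), and that planar graphs lie in both classes by Statement 1. Closure then follows from a connectivity obstruction: $K_5$ is $4$-connected and $K_{3,3}$ is $3$-connected, so neither can be split by a vertex-cut of order $3$, respectively $2$. Concretely, if $G=G_1\bigoplus_k G_2$ with separator $S'$ of order $k$, then any $K_5$-minor (for $k\le 3$) or $K_{3,3}$-minor (for $k\le 2$) must have all its branch sets confined to one side, hence lies within the torso $G_i$, which contains the completed clique on $S'$; since each $G_i$ avoids the minor by hypothesis, so does $G$. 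Thus both classes are closed under the stated clique-sums, and every graph assembled from the listed base pieces is $K_5$- respectively $K_{3,3}$-minor-free.

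For the \emph{only if} directions I would induct on $|V(G)|$, reducing along small separators, and rest the reductions on two crux lemmas: (i) every internally $4$-connected $K_5$-minor-free graph is planar or isomorphic to $V_8$; (ii) every $3$-connected $K_{3,3}$-minor-free graph is planar or isomorphic to $K_5$. Granting these, take a $K_{3,3}$-minor-free graph $G$: if it is $3$-connected, lemma (ii) places it among the base graphs; otherwise it has a vertex-cut $S$ of order at most $2$ (by Lemma \ref{Lemma-cut-vertex} the pieces obtained by splitting along $S$ and completing $S$ to a clique are connected and smaller), giving $G=G_1\bigoplus_{\le 2}G_2$. Here one verifies that each $G_i$ is again $K_{3,3}$-minor-free: since $S$ is a minimal separator, the opposite side joins the vertices of $S$ by internally disjoint paths, so the completed clique on $S$ is realized as a minor inside $G$, and any forbidden minor in $G_i$ would lift to one in $G$. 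Induction then closes the case. The $K_5$-minor-free case is identical but uses cuts of order at most $3$ and lemma (i), with $V_8$ arising precisely as the internally $4$-connected nonplanar base graph.

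The main obstacle is proving the two crux lemmas, which carry all the topological weight. For (ii) I would start from a $3$-connected nonplanar $K_{3,3}$-minor-free graph: by Statement 1 it contains a $K_5$- or $K_{3,3}$-subdivision, and $K_{3,3}$-minor-freeness forbids the latter, so it contains a $K_5$-subdivision; using $3$-connectivity one then shows that any vertex or path outside this subdivision would create a $K_{3,3}$-minor, forcing $G\cong K_5$. Lemma (i) is the hardest step: from an internally $4$-connected nonplanar $K_5$-minor-free graph one extracts a $K_{3,3}$-subdivision and must analyse how the high connectivity routes additional paths between its branch vertices, showing that either a $K_5$-minor is forced (a contradiction) or the routing is so rigid that $G$ must be exactly $V_8$. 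Carrying out this case analysis, together with the bookkeeping that keeps the clique-completed pieces inside the class throughout the induction, is where the real difficulty lies.
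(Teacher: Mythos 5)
The paper itself offers no proof of this statement---it is Wagner's classical theorem, cited directly from \cite{wagner1937eigenschaft}---so your proposal can only be measured against the standard proofs. Your treatment of Statement 1 (reduce minors to subdivisions, reduce to the $3$-connected case, then Thomassen-style induction via a contractible edge) and of the \emph{if} directions of Statements 2 and 3 (closure under clique-sums via the torso-projection argument, using that $K_5$ and $K_{3,3}$ have no separators of order $3$, resp.\ $2$) follows the standard route; the phrase ``all its branch sets confined to one side'' is loose, since a branch set may straddle the separator, but the intended repair---replace each straddling branch set by its trace in one torso and restore connectivity and adjacency through the completed clique---is exactly the standard argument you gesture at.

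There is, however, a genuine gap in your \emph{only if} direction for Statement 2, at the $3$-cut reduction. You claim that because $S=\{u,v,w\}$ is a minimal separator, ``the opposite side joins the vertices of $S$ by internally disjoint paths, so the completed clique on $S$ is realized as a minor inside $G$,'' whence any $K_5$-minor in a clique-completed torso would lift to $G$. This is false for $|S|=3$: a single component attached to all of $u,v,w$ contracts to one vertex adjacent to $u$, $v$, and $w$---a claw, not a triangle---and no further contraction produces all three edges $uv$, $vw$, $wu$. Concretely, take $G=K_{3,3}$ with parts $\{a_1,a_2,a_3\}$ and $\{b_1,b_2,b_3\}$, and $S=\{b_1,b_2,b_3\}$: the torso on $\{a_2,a_3\}\cup S$ with $S$ completed to a triangle is $K_5$ minus one edge, which has $9$ edges on $5$ vertices, while every $5$-vertex minor of $K_{3,3}$ has at most $8$ edges; so the clique-completed torso is \emph{not} a minor of $G$, even though (as Wagner's theorem guarantees) it is still $K_5$-minor-free. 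Your lifting argument therefore cannot establish that the pieces stay in the class, and this is precisely the point where the classical proofs do real work: one proves the theorem for edge-maximal $K_5$-minor-free graphs and shows that in such a graph every minimal separator of order at most $3$ already induces a clique---if $u,v\in S$ were nonadjacent, then $G+uv$ would contain a $K_5$-minor lying in one side together with $S$, and the single edge $uv$ could be rerouted as a $u$--$v$ path through another component of $G-S$ (one path suffices, unlike the full triangle), yielding a $K_5$-minor in $G$ itself. The torsos are then induced subgraphs and inherit minor-freeness for free. Your reductions at cuts of order at most $2$ are unaffected (a minimal $2$-separator is indeed joined by a path through the far side), so the Statement 3 induction stands, and deferring your crux lemmas (i) and (ii) is legitimate in a sketch; but as written, the $3$-clique-sum bookkeeping in Statement 2 does not go through.
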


 \begin{figure}[htbp]
     \centering
     \includegraphics[width=0.5\linewidth]{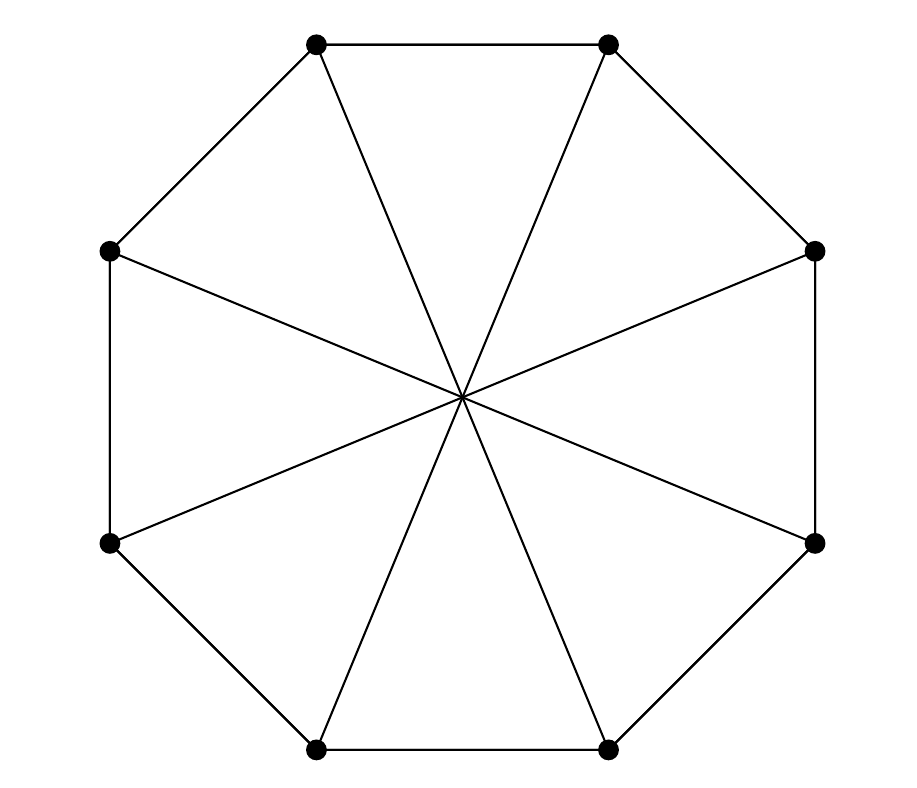}
     \caption{The Wagner graph $V_8$}
     \label{fig:Wagner graph}
 \end{figure}
 
	\begin{cor}[\cite{wagner1937eigenschaft}]\label{Coro-planarity-minor-free}
		Each of the following holds.
  \begin{enumerate}
    \item Every $ 4 $-connected $ K_5 $-minor-free simple graph is planar.
    \item Every $ 3 $-connected $ K_{3,3} $-minor-free simple graph except $K_5$ is planar.
  \end{enumerate}
	\end{cor}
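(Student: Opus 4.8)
The plan is to derive both statements directly from Wagner's structural decomposition (Theorem \ref{Theorem-Wagner-character}), exploiting the principle that a highly connected graph cannot be split by a clique-sum of small order and must therefore coincide with one of the indecomposable building blocks. I would argue by induction on the number of blocks appearing in a fixed Wagner decomposition of $G$, with Lemma \ref{Lemma-cut-vertex}(3) as the main tool.

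For statement 1, let $G$ be $4$-connected and $K_5$-minor-free. By Theorem \ref{Theorem-Wagner-character}(2), $G$ is built from planar graphs and $V_8$ by clique-sums of order at most $3$. If the decomposition consists of a single block, then $G$ is planar or $G \cong V_8$; since $V_8$ is $3$-regular, it has a vertex of degree $3$ whose neighbourhood is a $3$-vertex-cut, so $V_8$ is not $4$-connected and hence $G$ is planar. Otherwise I write $G = G_1 \bigoplus_k G_2$ for the final clique-sum, $0 \le k \le 3$. A $0$-sum would disconnect $G$, so $k \ge 1$. If neither summand is isomorphic to $K_k$, then Lemma \ref{Lemma-cut-vertex}(3) shows that the $k$-element set $S'$ is a vertex-cut of $G$, contradicting $4$-connectivity since $k \le 3$.

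The remaining (degenerate) case is the only real obstacle: some summand, say $G_2$, equals $K_k$. Then $V(G_2) = S' \subseteq V(G_1)$, so the sum introduces no new vertices and only edges inside $S'$; since every such edge already lies in the completed clique of $G_1$, the graph $G$ is a spanning subgraph of $G_1$. Because adding edges cannot destroy connectivity, $G_1$ is again $4$-connected; it is $K_5$-minor-free and uses strictly fewer blocks, so by induction $G_1$ is planar, whence $G$, as a subgraph, is planar. This closes statement 1. The delicate point throughout is precisely this bookkeeping: making rigorous that connectivity rules out every \emph{nondegenerate} low-order clique-sum while degenerate ones can be peeled off, so that $G$ really is a single block.

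Statement 2 follows the identical template via Theorem \ref{Theorem-Wagner-character}(3): a $3$-connected $K_{3,3}$-minor-free simple graph is built from planar graphs and $K_5$ by clique-sums of order at most $2$. A nondegenerate clique-sum of order at most $2$ would, through Lemma \ref{Lemma-cut-vertex}(3), produce a vertex-cut of order at most $2$, contradicting $3$-connectivity, and the degenerate case reduces the number of blocks exactly as above. Hence $G$ is a single block, i.e.\ planar or $K_5$. The essential difference from part 1 is that the surviving non-planar block, $K_5$, is genuinely $3$-connected and non-planar, so it cannot be eliminated by a connectivity argument and must instead be excluded by hypothesis; with $G \not\cong K_5$ assumed, $G$ is planar.
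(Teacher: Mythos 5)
Your proof is correct and follows exactly the derivation the paper intends: the corollary is stated without proof as a consequence of Wagner's decomposition (Theorem \ref{Theorem-Wagner-character}), with high connectivity ruling out every nondegenerate clique-sum of order at most $3$ (resp.\ $2$) via Lemma \ref{Lemma-cut-vertex}(3), leaving only the blocks themselves. Your treatment of the degenerate case (a summand isomorphic to $K_k$, making $G$ a spanning subgraph of the other summand, so induction on the number of blocks applies) fills in the only detail the paper leaves implicit, and it goes through in statement 2 as well since a proper spanning subgraph of $K_5$ obtained this way is $K_5$ minus one edge, which is planar.
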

	

\begin{lem}\label{Lemma-3con-3components} 
Let $G$ be $3$-connected non-planar $K_5 $-minor-free simple graph.
If $ G \not = V_8$, then $G$ has a $3$-vertex-cut $S=\{u,v,w\}$ such that 
$\omega(G-S) \geq 3$ and the simple graph $G'$ with $V(G')=V(G)$ and $E(G')=E(G)\cup \{uv,vw,wu\}$ is $ K_5 $-minor-free.
	\end{lem}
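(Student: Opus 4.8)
The plan is to exploit the clique-sum structure of $K_5$-minor-free graphs from Wagner's Theorem (Theorem~\ref{Theorem-Wagner-character}) together with the fact, recorded in Corollary~\ref{Coro-planarity-minor-free}, that every $4$-connected $K_5$-minor-free simple graph is planar. Since $G$ is non-planar and $3$-connected, it cannot be $4$-connected, so it has a $3$-vertex-cut; the real work is to select one with both required properties.

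First I would set up the decomposition. By Wagner's Theorem, $G$ is obtained from planar graphs and $V_8$ by $0$-, $1$-, $2$- and $3$-clique-sums. I would observe that $V_8$ is triangle-free and that any clique-sum of order at most $2$ with both sides non-trivial produces a vertex-cut of order at most $2$ (Lemma~\ref{Lemma-cut-vertex}), which is impossible in the $3$-connected graph $G$. Hence no copy of $V_8$ can appear as a proper part, and since $G \neq V_8$ and $G$ is non-planar, $G$ is a non-trivial clique-sum built only from $3$-connected planar pieces, glued along triangles by $3$-clique-sums; I arrange these (at least two) pieces in the associated decomposition tree $T$.

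Next I would locate a good gluing triangle. For any gluing triangle $S=\{u,v,w\}$, adding its three edges back turns $G$ into the ordinary $3$-clique-sum $A \bigoplus_3 B$ of the two $K_5$-minor-free graphs obtained by splitting $T$ at the tree-edge corresponding to $S$; since the class of $K_5$-minor-free graphs is closed under clique-sums of order at most $3$ (the $4$-connected graph $K_5$ cannot be spread across a separation of order at most $3$), the graph $G'$ with $E(G')=E(G)\cup\{uv,vw,uw\}$ is again $K_5$-minor-free, for \emph{every} choice of gluing triangle $S$. So the entire difficulty is the component count. Here I claim that some gluing triangle $S$ is non-facial in the $3$-connected planar piece $A$ on one of its two sides: otherwise every gluing triangle would bound a face in both incident pieces, and one could embed the pieces successively into the chosen triangular faces to conclude that $G$ is planar, a contradiction. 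Fixing such a piece $A$ and letting $B$ be the union of all pieces on the other side of $S$, we obtain $G = A \bigoplus_3 B$ with $A$ a $3$-connected planar graph and $S$ a non-facial triangle of $A$.

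Finally I would count components. In a $3$-connected planar graph a triangle either bounds a face or separates the graph, so the non-facial triangle $S$ yields $\omega(A-S)\ge 2$ (an ``inside'' and an ``outside'' part). As $B \neq K_3$ we also have $\omega(B-S)\ge 1$, and because $S$ carries all edges between the two sides, the components of $G-S$ are precisely those of $A-S$ together with those of $B-S$; hence $\omega(G-S)=\omega(A-S)+\omega(B-S)\ge 3$. Combined with the preservation of $K_5$-minor-freeness established above, $S$ is the cut required by the statement. I expect the main obstacle to be the structural reduction: justifying cleanly that $G$ decomposes into $3$-connected planar pieces glued along triangles in a tree, and turning the informal ``embed into the facial triangles'' step into a rigorous proof that a clique-sum of planar graphs all of whose gluing triangles are facial is planar. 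The component count and the verification that $G'$ is $K_5$-minor-free should then be comparatively routine.
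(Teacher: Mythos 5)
Your overall strategy is sound and rests on the same pillars as the paper's proof---Wagner's decomposition (Theorem~\ref{Theorem-Wagner-character}), the exclusion of $V_8$ and of clique-sums of order at most $2$ via $3$-connectivity (Lemma~\ref{Lemma-cut-vertex}), and the observation that retaining the triangle edges keeps $G'$ inside the class described by Wagner's theorem---but you organize the planarity argument differently, and the paper's organization is worth noting because it sidesteps your self-identified obstacles. Instead of building the full decomposition tree, the paper takes a decomposition into a \emph{minimum} number $k$ of planar summands and analyzes only the first $3$-clique-sum $H=G_1\bigoplus_3 G_2$ in the sequence: by minimality $H$ is non-planar, and the key fact is that a planar piece $G_i$ containing the triangle $S$ with $G_i-S$ \emph{connected} can be embedded with $S$ bounding the outer face, so both sides connected would force $H$ planar. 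This is essentially the contrapositive of your ``non-facial triangle in a $3$-connected planar graph separates,'' but it needs no $3$-connectivity of the pieces, no uniqueness of embeddings, and no simultaneous-faciality bookkeeping across a tree; $\omega(G-S)\geq 3$ then lifts through the remaining sums by Lemma~\ref{Lemma-cut-vertex}. Your worry about establishing that the pieces are $3$-connected is resolvable exactly as you suspect (a cut of order $\le 2$ in a piece lifts to one in $G$ by Lemma~\ref{Lemma-cut-vertex}), but the paper shows it can be avoided entirely.

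One step of your sketch is genuinely false as stated and needs a case added. In the claim ``otherwise every gluing triangle would bound a face in both incident pieces, and one could embed the pieces successively into the chosen triangular faces,'' the successive embedding fails when two distinct pieces $B_1,B_2$ are glued along the \emph{same} triangle $S$ of a piece $A$: after pasting $B_1$ into the face bounded by $S$, no face of the resulting drawing is bounded by the full triangle, so $B_2$ cannot be pasted, and $G$ need not be planar even though all gluing triangles are facial. The repair is easy and stays within your framework: in that configuration $G-S$ decomposes into $B_1-S$, $B_2-S$, and the rest (each nonempty after discarding degenerate $K_3$ summands, the same harmless normalization you need for your unjustified assertion $B\neq K_3$), so $S$ itself is a $3$-vertex-cut with $\omega(G-S)\geq 3$ and, as you already showed for every gluing triangle, adding its edges preserves $K_5$-minor-freeness. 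With that case routed directly to the conclusion rather than to the planarity contradiction, your argument goes through.
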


 	\begin{proof}
		Since $G$ is $3$-connected, it follows by  Lemma \ref{Lemma-cut-vertex} and Statement 2 of Theorem \ref{Theorem-Wagner-character}, that $G$ is obtained from connected planar graphs $G_1,\ldots, G_k$ by a sequence $\sigma$ of $3$-clique-sums. 
  Note that $k \geq 2$, since $G$ is not planar. Without loss of generality, we assume that $k$ is minimum and that the first clique-sum operation in the sequence $\sigma$ is taken on graphs $G_1$ and $G_2$. Denote $H=G_1\bigoplus_3 G_2$, and let $S$ be the vertex set with respect to $\bigoplus_3$. Note that $H$ is not planar since $k$ is minimum.  As a consequence, $|V(G_1)|,|V(G_2)|>3$. For every $i \in \{1,2\}$, if $G_i-S$ is connected, then $G_i$ can be embedded in the plane such that the boundary of the outer face is given by the triangle $G_i[S]$. Therefore, if both $G_1-S$ and $G_2-S$ are connected, then $H$ is planar, a contradiction. Thus, without lose of generality we assume $\omega(G_1-S) \geq 2$, which implies $\omega(H-S)\geq 3$. With Lemma~\ref{Lemma-cut-vertex} we deduce $\omega(G-S)\geq 3$.
  
  Furthermore, by the definition of clique-sums and Statement 2 of Theorem \ref{Theorem-Wagner-character}, the graph obtained from $G$ by extending $G[S]$ to a triangle is still $K_5$-minor-free. This completes the proof.
	\end{proof}


	\subsection{$r$-graphs}
 
	Let $G$ be an $r$-graph. An edge-cut $\partial(X)$ is a \emph{non-trivial tight edge-cut} if $\vert X \vert$ is odd, 
 $\vert \partial(X) \vert = r$ and $\vert X \vert, \vert V(G) \setminus X \vert > 1$. 
 
	\begin{lem}\label{Lemma-2-vertex-cut}
		For each $ r\geq 3 $, if a connected $ r $-graph $ G $ has a $2$-vertex-cut, then either $ G $ has a non-trivial tight edge-cut or $ G_s $ is a circuit of length $ 4 $.
	\end{lem}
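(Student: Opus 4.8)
The plan is to analyze the $2$-vertex-cut by a degree count and to split into cases according to the parities of the orders of the components of $G$ minus the cut. Write $S=\{x,y\}$ for the $2$-vertex-cut and let $A_1,\dots,A_m$ be the components of $G-S$; since $G$ is connected we have $m\ge 2$. Put $c=\mu_G(x,y)$. As no edge joins two distinct components, every edge leaving a component ends in $S$, so $|\partial_G(A_i)|=|E_G(A_i,\{x\})|+|E_G(A_i,\{y\})|$. Summing over $i$ and using $\deg(x)=\deg(y)=r$ yields the first identity $\sum_{i=1}^{m}|\partial_G(A_i)|=2(r-c)$, while for a single component $A$ with complement $B=V(G)\setminus(A\cup S)$ the same bookkeeping gives
\[
|\partial_G(A\cup\{x\})|+|\partial_G(A\cup\{y\})|=\deg(x)+\deg(y)=2r .
\]
These two identities drive the whole argument.

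First I would dispose of the case that some component $A$ has even order. Then both $A\cup\{x\}$ and $A\cup\{y\}$ have odd order, so each of the corresponding cuts is at least $r$; as they sum to exactly $2r$, each equals $r$. Hence $A\cup\{x\}$ is a tight edge-cut, and it is non-trivial since $|A\cup\{x\}|\ge 2$ and its complement $\{y\}\cup B$ has at least two vertices (here $B\ne\emptyset$ because $m\ge 2$). This settles the lemma whenever a component of even order exists.

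It remains to treat the case that every component has odd order, so that $|\partial_G(A_i)|\ge r$ for all $i$. Then $2(r-c)=\sum_i|\partial_G(A_i)|\ge mr\ge 2r$, which forces $c=0$, $m=2$, and $|\partial_G(A_1)|=|\partial_G(A_2)|=r$. If $|A_1|\ge 3$ (and symmetrically for $A_2$), then $A_1$ is an odd set with $|\partial_G(A_1)|=r$ whose two sides each contain more than one vertex, i.e.\ a non-trivial tight edge-cut. Otherwise $|A_1|=|A_2|=1$, say $A_1=\{u\}$ and $A_2=\{v\}$, so $V(G)=\{x,y,u,v\}$ with no $xy$-edge and no $uv$-edge. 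At this point I would invoke $2$-connectivity: if $u$ were not joined to $y$ then $x$ would be a cut vertex, and likewise for the other three pairs, so all of $xu,uy,yv,vx$ are present and $G_s$ is exactly the circuit $xuyvx$ of length $4$.

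The counting itself is routine; the points that need care are the parity bookkeeping — making sure that the sets whose cuts I bound from below genuinely have odd order — and the non-triviality check that neither side of the exhibited cut is a single vertex. I expect the main subtlety to be the final configuration: the exceptional outcome $G_s=C_4$ is forced exactly when the inequality $\sum_i|\partial_G(A_i)|\ge mr$ is tight with two singleton components, and it is precisely there that $2$-connectivity is required to guarantee that all four edges of the cycle are present.
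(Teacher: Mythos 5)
Your proof is correct and takes essentially the same approach as the paper: both arguments are the same degree count at the two cut vertices combined with the $r$-graph bound on odd sets, using $|\partial_G(A\cup\{x\})|+|\partial_G(A\cup\{y\})|=2r$ to get a tight cut when some component $A$ is even, forcing $\mu_G(x,y)=0$, exactly two components, and tight cuts $|\partial_G(A_i)|=r$ when all components are odd, and invoking $2$-connectivity only to pin down the exceptional $C_4$. The sole cosmetic difference is your case split (some even component vs.\ all odd) in place of the paper's (all even vs.\ at least two odd, justified by the parity of $|V(G)|$), which changes nothing substantive.
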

	
	\begin{proof}
		By the definition of an $r$-graph, $G$ has no cut-vertex, and hence $G$ is $2$-connected. Assume that $\{u,v \}$ is a vertex-cut  of $ G $, and $ G-\{u,v\} $ has $ t $ components, denoted by $ G_1,\ldots, G_t $, where $ t\geq 2 $.  
		For convenience, let $ |E_G(V(G_i),u)|=a_i $ and $ |E_G(V(G_i),v)|=b_i $ for each $ i\in\{1,\ldots,t\} $.  Since $ G $ is $ 2 $-connected,  we have $a_i\neq 0$ and $ b_i\neq0$.  Note that the order of $ G $ is even, so the proof can be split into two cases as follows.
		
		{\bf Case 1.} $ G-\{u,v\} $ contains at least  two components of  odd order, say $ G_1$ and $ G_2$.
		
		By definition of $ r $-graphs, $ a_1+b_1= |\partial_G(V(G_1))|\geq r $ and $ a_2+b_2= |\partial_G(V(G_2))|\geq r $.  Moreover, $a_1+a_2\leq d_G(u)=r $ and $b_1+b_2\leq d_G(v)=r $. So $ a_1=b_2 $, $ a_2=b_1$, $ a_1+b_1=r $, and  $ a_2+b_2=r $.  It follows that $ t=2 $ and $ u $ is not adjacent to $ v $. If $ |V(G_i)|\geq 2 $ for some $ i\in\{1,2\} $, then $ \partial_G(V(G_i)) $ is a non-trivial tight edge-cut.  If $ |V(G_1)|=|V(G_2)|=1 $, then $ G_s $ is a circuit of length $ 4 $.
		
		{\bf Case 2.} The order of $ G_i $ is even for  each $ i\in\{1,\ldots,t\} $.
		
		By definition of $ r $-graphs, $b_i+(r-a_i)=|\partial_G(V(G_i)\cup\{u\})|\geq r $ and  $a_i+(r-b_i)=|\partial_G(V(G_i)\cup\{v\})|\geq r $. This implies $ a_i=b_i $. Thus,  $\partial_G(V(G_i)\cup\{u\}) $ is a non-trivial tight edge-cut.
	\end{proof}

	\begin{lem}\label{Lemma-3-vertex-cut}
		For each $ r\geq 3 $, if a $ 3 $-connected $ r $-graph $ G $ has a $3$-vertex-cut $S$ such that $G-S$ contains at least three components of odd order,  then either $ G $ has a non-trivial tight edge-cut or $ G_s \cong K_{3,3}$  .
	\end{lem}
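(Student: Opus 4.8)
The plan is to mimic the case analysis of Lemma~\ref{Lemma-2-vertex-cut}, but now with a cut of size three. Write $S=\{u,v,w\}$ and let $G_1,\dots,G_t$ be the components of $G-S$. For each $i$ set $a_i=|E_G(V(G_i),\{u\})|$, $b_i=|E_G(V(G_i),\{v\})|$, and $c_i=|E_G(V(G_i),\{w\})|$, and abbreviate the three multiplicities inside $S$ by $p=\mu_G(u,v)$, $q=\mu_G(v,w)$, and $s=\mu_G(u,w)$. Counting the degree of $u$ gives $\sum_{i=1}^t a_i+p+s=r$, and symmetric identities hold for $v$ and $w$. Since an $r$-graph cannot have $V(G)$ itself as an odd set (else $|\partial_G(V(G))|=0<r$), the order $|V(G)|$ is even, so $|V(G)\setminus S|$ is odd and hence the number of odd-order components is odd; by hypothesis it is at least three.

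First I would use $3$-connectivity to show $a_i,b_i,c_i\geq 1$ for every $i$: if, say, $c_i=0$, then $\{u,v\}$ separates $V(G_i)$ from the non-empty remainder containing $w$, contradicting that $G$ has no $2$-vertex-cut. The heart of the argument is then a double count. Each odd component satisfies the $r$-graph bound $a_i+b_i+c_i=|\partial_G(V(G_i))|\geq r$, so summing over the (say $m$) odd components gives at least $mr$. On the other hand this sum is bounded above by $\sum_{i=1}^t(a_i+b_i+c_i)\leq (r-p-s)+(r-p-q)+(r-q-s)=3r-2(p+q+s)\leq 3r$. Comparing the two bounds forces $m\leq 3$, hence $m=3$, and moreover squeezes every inequality to an equality: $p=q=s=0$, each odd component satisfies $a_i+b_i+c_i=r$ exactly, and every even component contributes nothing to $\sum a_i$, $\sum b_i$, or $\sum c_i$. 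But an even component with $a_i=b_i=c_i=0$ would be disconnected from $G$, so there are no even components and $t=3$.

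It remains to read off the conclusion from this rigid structure. Each of the three odd components now yields an edge-cut $\partial_G(V(G_i))$ with $|\partial_G(V(G_i))|=r$, whose complementary side has size at least $5$. If some $|V(G_i)|>1$ (equivalently $\geq 3$, as it is odd) this cut is non-trivial and we are done. Otherwise all three components are single vertices $x_1,x_2,x_3$; since $p=q=s=0$ there are no edges inside $S$, there are no edges among the $x_i$, and $a_i,b_i,c_i\geq 1$ says each $x_i$ is joined to each of $u,v,w$, so $G_s\cong K_{3,3}$.

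I expect the main obstacle to be the bookkeeping in the double count, in particular making the equality analysis airtight so that it simultaneously kills all even components, forces $p=q=s=0$, and makes each per-component cut exactly tight. The other point requiring care is checking that the resulting tight cut is \emph{non-trivial} in the sense of the definition, which is precisely why the all-singleton case must be separated out and identified with $K_{3,3}$.
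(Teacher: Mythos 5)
Your proof is correct and follows essentially the same route as the paper: the same counting argument (the paper phrases it as $|\partial_G(S)|\leq 3r$ versus $\sum_{i=1}^{3}|\partial_G(V(G_i))|\geq 3r$, which is exactly your double count) squeezed to equality, forcing $S$ independent, exactly three components, and each cut tight, followed by the identical split into the non-trivial tight cut case and the all-singleton $K_{3,3}$ case. Your explicit use of $3$-connectivity to get $a_i,b_i,c_i\geq 1$ even fills in a small detail the paper leaves implicit when identifying the singleton case with $K_{3,3}$.
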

	
	\begin{proof}
		Assume that $G_1,G_2,G_3$ are three odd  components  of $G-S$.  Note that $|\partial_G(S)|\geq |\partial_G(V(G_1))|+ |\partial_G(V(G_2))|+ |\partial_G(V(G_3))|$. By definition of $r$-graphs, we have $|\partial_G(S)|\leq 3r$, and $|\partial_G(V(G_i))|\geq r$ for each $i\in\{1,2,3\}$. Hence, $|\partial_G(S)|= 3r$ and $|\partial_G(V(G_i))|= r$. This implies that
		$S$ is an independent set and $G-S$ contains exactly three components. Therefore, if $G_s\ncong K_{3,3}$, then
		$|V(G_i)|\geq3$ for some $i\in\{1,2,3\}$, and so $\partial_G(V(G_i))$ is a non-trivial tight edge-cut. 
	\end{proof}
	
	\begin{theo}[K\"onig's Theorem \cite{konig1931graphs}]
 \label{Theorem-bipartite-class1}
		Every $ r $-regular bipartite graph is of class $ 1 $.
	\end{theo}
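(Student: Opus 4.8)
The plan is to prove the statement by induction on $r$, peeling off one perfect matching at a time as a single color class. The crucial fact I would establish is that every $r$-regular bipartite graph with $r \geq 1$ has a perfect matching; granting this, the induction is immediate.

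First I would fix a bipartition $V(G) = A \cup B$ and count edges from each side, obtaining $r|A| = |E(G)| = r|B|$, hence $|A| = |B|$. Next I would verify Hall's condition on $A$: for any $S \subseteq A$, all $r|S|$ edges incident to $S$ have their other end in $N(S) \subseteq B$, while the vertices of $N(S)$ are together incident to at most $r|N(S)|$ edges, so $r|S| \leq r|N(S)|$ and thus $|S| \leq |N(S)|$. By Hall's marriage theorem $G$ then has a matching saturating $A$, and since $|A| = |B|$ this matching $M$ is perfect.

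I would then consider $G - M$, which is $(r-1)$-regular and still bipartite. By the induction hypothesis, with base case $r = 1$ where $G = M$ is itself a perfect matching and trivially of class $1$, the graph $G - M$ admits a proper $(r-1)$-edge-coloring; adding $M$ as a fresh color gives a proper $r$-edge-coloring of $G$. Since $\Delta(G) = r$, this yields $\chi'(G) = r$, i.e.\ $G$ is of class $1$.

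The only nonroutine ingredient is the existence of the perfect matching, so the main obstacle is really the matching argument. Either one invokes Hall's theorem as above, or, if a self-contained proof avoiding Hall is preferred, one replaces the whole scheme by an induction on $|E(G)|$ using alternating paths: delete an edge $uv$, color $G - uv$ with $r$ colors, pick a color $\alpha$ missing at $u$ and a color $\beta$ missing at $v$, and, if $\alpha \neq \beta$, swap $\alpha$ and $\beta$ along the maximal $\alpha/\beta$-alternating path starting at $u$. Here bipartiteness is exactly what makes the swap safe: such a path from $u \in A$ that reached $v$ would terminate with an $\alpha$-edge at $v$ and hence have even length, forcing its far endpoint back into $A$ and contradicting $v \in B$. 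Thus $v$ is untouched, both $u$ and $v$ end up missing $\beta$, and $uv$ can be colored $\beta$.
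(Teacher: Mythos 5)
Your proof is correct. The paper itself gives no argument for this statement: it is quoted as K\"onig's Theorem with a citation, so there is no internal proof to compare against; what you have written is the standard classical proof, and it is complete. One point worth making explicit, since the paper allows parallel edges: both of your routes survive in the multigraph setting. In the Hall-based route, your counting argument (all $r|S|$ edge-ends from $S$ land in $N(S)$, which carries at most $r|N(S)|$ edge-ends) is valid with multiplicities, so Hall's condition holds for the underlying simple bipartite graph and the perfect matching exists; peeling it off keeps the graph $(r-1)$-regular and bipartite, and the induction goes through. In the Kempe-chain route, the parity argument is also multiplicity-proof: the maximal $\alpha/\beta$-path from $u$ starts with a $\beta$-edge (as $u$ misses $\alpha$), so if it reached $v$ it would enter on an $\alpha$-edge (as $v$ misses $\beta$), giving an even-length path from $u\in A$ ending in $B$, a contradiction --- no appeal to simplicity is made. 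After the swap $u$ and $v$ both miss $\beta$ and the deleted edge can be colored, completing the induction on $|E(G)|$.
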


In \cite{ma2023sets} the smallest $r$-graphs of class $2$ are characterized for every $r \geq 3$. In particular, they are of order $10$. Thus we obtain the following corollary.

	\begin{cor} \label{Theorem-V8-class1}
		Every $ r $-graph with the Wagner graph as the underlying graph is of class $ 1 $.
	\end{cor}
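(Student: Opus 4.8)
The plan is to derive this directly from the characterization of minimum class-$2$ $r$-graphs cited just above. The only thing that needs to be checked by hand is that an $r$-graph $G$ whose underlying simple graph is the Wagner graph $V_8$ is small enough to fall below the order threshold for class-$2$ $r$-graphs.

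First I would record the basic structural facts forced by $G_s = V_8$. Since $V_8$ is a simple $3$-regular graph on $8$ vertices, we have $V(G) = V(V_8)$ and hence $|V(G)| = 8$; moreover every vertex of $G$ has exactly three distinct neighbours, so its degree in $G$ is at least $3$, giving $r \geq 3$. (In fact $G$ arises from $V_8$ by assigning to each of its $12$ edges $uv$ a multiplicity $\mu_G(u,v) \geq 1$ so that the three multiplicities at each vertex sum to $r$.) In particular the characterization of \cite{ma2023sets}, which is stated for $r \geq 3$, applies to $G$.

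Next I would invoke the cited result: for every $r \geq 3$, every $r$-graph of class $2$ has order at least $10$. Since $|V(G)| = 8 < 10$, the $r$-graph $G$ cannot be of class $2$, and therefore it is of class $1$, which is exactly the assertion of the corollary.

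The argument carries essentially no difficulty of its own; all of the work is hidden in the cited classification, and the step that could go wrong is purely bookkeeping, namely confirming that $G$ really has order $8$ and that $r \geq 3$, so that the order bound is both applicable and decisive. I would also note that a direct self-contained argument is less convenient: general upper bounds such as the Goldberg--Seymour theorem only yield $\chi'(G) \leq r+1$ for an $r$-graph, so one cannot conclude class $1$ without additional structural input, which is precisely what the order bound of \cite{ma2023sets} supplies.
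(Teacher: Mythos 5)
Your proposal is correct and follows exactly the paper's route: the authors likewise derive the corollary from the characterization in \cite{ma2023sets} that the smallest class-$2$ $r$-graphs ($r\geq 3$) have order $10$, noting that an $r$-graph with underlying graph $V_8$ has only $8$ vertices. Your added bookkeeping (that $G_s=V_8$ forces $r\geq 3$, so the cited result applies) is a sensible check that the paper leaves implicit.
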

 
	
	

Let $G$ be a graph and let $x \in V(G)$ with $|N_G(x)| \geq 2$.
A lifting (of $G$) at $x$ is the following
operation: Choose two distinct neighbors $y$ and $z$ of $x$, delete an edge $e_1$ joining $x$ with $y$,
delete an edge $e_2$  joining $x$ with $z$ and add a new edge $e$  joining $y$ with $z$.
	
	\begin{theo}[\cite{ma2023sets}]\label{theo:r-graph_lifting}
		Let $r\geq 2$ be an integer, let $G$ be a connected $r$-graph and let $X$ be a non-empty proper subset of $V(G)$. If $\vert X \vert$ is even, then $G/X$ can be transformed into a connected $r$-graph by applying $\frac{1}{2}\left \vert \partial_G(X)\right|$ lifting operations at $w_X$ and deleting $w_X$, where $w_X$ is the contracting vertex in $G/X$. If $\vert X \vert$ is odd, then $G/X$ can be transformed into a connected $r$-graph by applying $\frac{1}{2}\left( \vert \partial_G(X) \vert - r \right)$ lifting operations at $w_X$.
	\end{theo}
	
	

We finish this section with the following statement on the structure of possible minimum counterexamples to aforementioned conjectures.  

\begin{theo}\label{Thm: min counterexample} 
    If $H$ is a possible minimum counterexample to one 
    of the statements of Conjectures \ref{Conj: Seymour exact} - \ref{Conj: combination2} or to any of statements 1, 2, or 3 of Theorem \ref{Theorem-equiv-minor-free}, then $H$  is a 3-connected 
    $r$-graph and it does not contain
    any non-trivial tight edge-cut. 
\end{theo}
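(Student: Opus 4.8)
The plan is to fix one of the listed statements, let $\mathcal{C}$ denote the corresponding minor-closed class of $r$-graphs (all $r$-graphs, planar, Petersen-minor-free, $K_5$-minor-free, or $K_{3,3}$-minor-free $r$-graphs), and let $P$ be the property at hand (being class $1$, or admitting a $(t,r)$-PM for the relevant fixed $t$). Let $H$ be an $r$-graph in $\mathcal{C}$ that fails $P$ and has the fewest vertices among all such; every $r$-graph in $\mathcal{C}$ with fewer vertices then satisfies $P$. Throughout I would use that an $r$-graph has even order, since otherwise $V(H)$ is an odd set with $\partial_H(V(H))=\emptyset$, contradicting $0\geq r$; note also that statement $4$ of Theorem~\ref{Theorem-equiv-minor-free} is excluded precisely because the class $cr(G_s)\leq 1$ is not minor-closed. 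First I would establish connectivity: if $H$ is disconnected, each component is a smaller $r$-graph in $\mathcal{C}$, hence satisfies $P$, and taking the disjoint union of an $r$-edge-coloring (resp.\ combining $(t,r)$-PMs matching-by-matching) gives $P$ for $H$, a contradiction. If $H$ has a cut vertex $v$ with components $C_1,\dots,C_k$ of $H-v$ and $a_i=|E_H(V(C_i),\{v\})|$, then $\sum_i a_i=r$, each $a_i\geq 1$, and since $\sum_i|V(C_i)|=|V(H)|-1$ is odd some $C_i$ is odd, forcing $a_i=|\partial_H(V(C_i))|\geq r$ and $a_j=0$ for $j\neq i$, a contradiction. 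Thus $H$ is $2$-connected.

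The core step is to rule out a non-trivial tight edge-cut. Suppose $\partial_H(X)$ is one, so $|X|$ is odd, $|\partial_H(X)|=r$, and $|X|,|X^c|>1$; since $|V(H)|$ is even, $X^c$ is odd as well. I would first show that both $H[X]$ and $H[X^c]$ are connected: if $H[X]$ had components $D_1,\dots,D_p$, then some $D_i$ is odd (as $|X|$ is odd), whence $|\partial_H(V(D_i))|\geq r$, which together with $\sum_i|\partial_H(V(D_i))|=|\partial_H(X)|=r$ forces the other components to have empty boundary, contradicting the connectivity of $H$; the same argument applies to $X^c$. Consequently the contractions $G_1=H/X^c$ and $G_2=H/X$ are minors of $H$, hence lie in $\mathcal{C}$, and by Theorem~\ref{theo:r-graph_lifting} (applied with $\tfrac12(|\partial_H(X)|-r)=0$ liftings) they are $r$-graphs; both have fewer vertices than $H$ because $|X|,|X^c|>1$. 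By minimality each of $G_1,G_2$ satisfies $P$.

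It then remains to glue the two solutions along the $r$ cut edges $e_1,\dots,e_r$. For class $1$: in any $r$-edge-coloring of $G_i$ the edges $e_1,\dots,e_r$ are all incident with the contracted vertex and so receive pairwise distinct colors, hence after permuting the colors on $G_2$ the two colorings agree on $e_1,\dots,e_r$, and their union is a proper $r$-edge-coloring of $H$. For a $(t,r)$-PM: in any such family $\{M_1,\dots,M_{tr}\}$ we have $\sum_j|M_j\cap\partial_H(X)|=\sum_{e\in\partial_H(X)}t=rt$, while each summand is odd and positive, so each of the $tr$ matchings meets $\partial_H(X)$ in exactly one edge; hence exactly $t$ matchings of $G_1$ and exactly $t$ of $G_2$ use each $e_\ell$, and pairing these and taking unions yields $tr$ perfect matchings of $H$ with every edge in exactly $t$ of them. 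Either way $H$ satisfies $P$, a contradiction, so $H$ has no non-trivial tight edge-cut.

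Finally, $H$ is $2$-connected of even order at least $4$, so if it had a $2$-vertex-cut then Lemma~\ref{Lemma-2-vertex-cut} would give either a non-trivial tight edge-cut (already excluded) or $H_s\cong C_4$; in the latter case $H$ is bipartite, hence class $1$ by König's Theorem~\ref{Theorem-bipartite-class1} and so satisfies $P$, again a contradiction. Therefore $H$ is $3$-connected. The step I expect to be most delicate is the reduction across the tight cut: one must ensure that \emph{both} contracted pieces remain inside the prescribed minor-closed class, which is exactly why the connectivity of $H[X]$ and of $H[X^c]$ must be proved (and why the crossing-number class is omitted), and the combinatorial bookkeeping that pairs the $t$ matchings through each cut edge has to be carried out consistently across all $r$ cut edges at once.
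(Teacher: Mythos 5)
Your proposal is correct and follows essentially the same route as the paper's proof: take a minimum counterexample, reduce across a non-trivial tight edge-cut by contracting each side (using Theorem~\ref{theo:r-graph_lifting} and the connectivity of $H[X]$ and $H[X^c]$ to stay inside the minor-closed class), glue the two solutions along the $r$ cut edges, and then exclude a $2$-vertex-cut via Lemma~\ref{Lemma-2-vertex-cut} together with K\"onig's Theorem~\ref{Theorem-bipartite-class1} for the $C_4$ case. The only difference is one of detail, not of method: you explicitly verify several steps the paper merely asserts (the connectivity of both sides of the tight cut, $2$-connectedness of a connected $r$-graph, and the parity/counting argument showing each perfect matching meets the tight cut in exactly one edge).
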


\begin{proof}
We prove the statement for Conjecture \ref{Conj: gen BF-Conj weak - weak}.
There is $r \geq 1$ such that for all $t \geq 1$ there is an $r$-graph $G$
which does not have a $(t,r)$-PM. Let $t$ be arbitrary but fixed and let 
$H$ be a minimum counterexample with respect to $t$.

Suppose to the contrary that $H$ has a non-trivial tight edge-cut $\partial_H(S)$.
Then $H[S]$ and $H[V(H)\setminus S]$ are connected. Thus,
$H/S$ and $H/(V(H)\setminus S)$ are $r$-graphs which both have a 
$(t,r)$-PM, which can be combined to a $(t,r)$-PM of $H$, 
a contradiction. 

An $r$-graph is 2-connected and suppose that $H$ has a 2-vertex-cut. 
It follows with Lemma \ref{Lemma-2-vertex-cut} that $H_s$ is a circuit of length $4$. But then $H$ has a $(t,r)$-PM, since $H_s$ is regular and class 1. 

For Conjecture \ref{Conj: gen BF-Conj weak - strong} fix $r$ for which the conjecture is
supposed to be false and let $H$ be a minimum possible counterexample. The statement follows as above.   

The statements for the other conjectures follow analogously, too. Note that 
$H[S]$ and $H[V(H)\setminus S]$ are connected and thus, they have the 
same excluded minors as $H$.  
\end{proof}

\section{Proof of Theorem \ref{Theorem-equiv-minor-free}} 

Trivially, each of statements $2$, $3$, and $4$ implies statement $1$. So it remains to prove that statement  $ 1 $ implies statement $2$, $3$, and $4$.
	
\subsection*{Statement $1$ implies statement $2$}
By way of contradiction, we suppose that $ G $ is a minimal counterexample to statement 2 with respect to $|V(G)|+|E(G)|$.
By Theorem \ref{Thm: min counterexample}, $G$ has no non-trivial tight edge-cut and $G$ is 3-connected. 
	
	
		
	
	By statement 1 and Corollary \ref{Theorem-V8-class1}, the underlying graph $G_s$ of $G$ is neither a planar graph nor the Wagner graph. It is worth noting that $G_s$ is $K_5$-minor-free. 
	Therefore, by Lemma \ref{Lemma-3con-3components}, $G_s$ has a $3$-vertex-cut $S$ such that $G_s-S$ contains at least three components, and so does $G$. We may assume $S=\{u,v,w\}$. According to Lemma~\ref{Lemma-3-vertex-cut} and Theorem~\ref{Theorem-bipartite-class1},  $G-\{u,v,w\}$ contains exactly one component of odd order, and hence it contains at least two components of even order, denoted by $G_1$ and $G_2$. By Theorem \ref{theo:r-graph_lifting}, for each $i \in \{1,2\}$ there are integers $d_i$, $h_i$ and $k_i$ such that a new $r$-graph $G_i'$ can be obtained from $G-V(G_i)$ by adding $d_i$ edges  joining $u$ and $v$, adding $h_i$ edges  joining $v$ and $w$, and adding $k_i$ edges  joining $w$ and $u$. For each $i\in\{1,2\}$, let $|E_G(u,V(G_i))|=a_i$, $|E_G(v,V(G_i))|=b_i$, $|E_G(w,V(G_i))|=c_i$. We have $a_i=d_i+k_i$, $b_i=d_i+h_i$ and $c_i=h_i+k_i$, which imply
  \begin{align*}
		d_i= \frac{1}{2} \left(a_i+b_i-c_i\right), \quad
		h_i= \frac{1}{2} \left(-a_i+b_i+c_i\right), \quad
		k_i= \frac{1}{2} \left(a_i-b_i+c_i\right).
	\end{align*}
	 According to Lemma \ref{Lemma-3con-3components}, each $G'_i$ is $K_5$-minor-free. Thus, by the minimality of $G$, each $G'_i$ has a $(t,r)$-PM $\mathcal{M}_i=\{M^i_1,\ldots,M^i_{tr}\}$. For every $j \in \{1,\ldots,tr\}$, let $\Tilde{M}^2_{j}=M^2_{j} \cap (E(G_1) \cup \partial_{G}(V(G_1)))$ and set $\Tilde{\mathcal{M}}_2=\{\Tilde{M}^2_1,\ldots,\Tilde{M}^2_{tr}\}$. Every element of $\Tilde{\mathcal{M}}_2$ contains exactly zero or two edges of $\partial_{G}(V(G_1))$ and hence, there are exactly $td_1$ matchings in $\Tilde{\mathcal{M}}_2$ that saturate $u,v$, exactly $th_1$ matchings in $\Tilde{\mathcal{M}}_2$ that saturate $v,w$ and exactly $tk_1$ matchings in $\Tilde{\mathcal{M}}_2$ that saturate $w,u$. As a consequence, $\mathcal{M}_1$ can be transformed to an $(t,r)$-PM of $G$ by appropriately replacing the edges that had been added to $G-V(G_1)$ in order to obtain $G_1'$ by elements of $\Tilde{\mathcal{M}}_2$.

\subsection*{Statement $1$ implies statement $3$}
By way of contradiction, we suppose that $ G $ is a minimal counterexample to statement 3 with respect to $|V(G)|+|E(G)|$. 
By Theorem \ref{Thm: min counterexample}, $G$ is $3$-connected.
By Statement 2 of Corollary \ref{Coro-planarity-minor-free}, either $G_s$ is planar or $G_s$ is isomorphic to $K_5$. Consequently, $G$ is planar, since $G$ is an $r$-graph. Thus, $G$ has a $(t,r)$-PM by statement $1$, a contradiction.
	
\subsection*{Statement $1$ implies statement $4$} 

Let $G$ be an $r$-graph with $cr(G_s) \leq 1$. 
Choose a drawing of $G_s$ with minimum number of crossing edges. 
Then there are at most two edges which cross in precisely one point. 
We will proceed by induction on $c_G$, which is defined as follows. 
If $G$ is planar, then $c_G = 0$. If $cr(G_s) = 1$, then choose among all drawings (with crossing number 1) a pair of crossing edges, say $xy$ and $uv$, such that $\mu_G(xy) \mu_G(uv)$ is minimum
and set $c_G = \mu_G(xy) \mu_G(uv)$.

If $c_G=0$, then $G$ is planar. Thus, $G$ has a $(t,r)$-PM by statement 1. 

If $c_G > 0$, then $cr(G_s) = 1$  and $\mu_G(x,y)\mu_G(u,v) \not = 0$. 
If there is a non-trivial tight edge-cut	
$\partial_G(X)$ and $E_G(x,y) \cup E_G(u,v) \subseteq \partial_G(X)$, then 
the two $r$-graphs obtained from $G$ by contracting $X$ and $V(G) \setminus X$, respectively, are planar. Thus, each of them has a $(t,r)$-PM.
These $(t,r)$-PMs can be combined to a $(t,r)$-PM of $G$. Therefore, for any $X\subset V(G)$ of odd cardinality, if $E_G(x,y) \cup E_G(u,v)\subseteq \partial_G(X)$, then $\vert\partial_G(X) \vert\geq r+2$.
		Let $e_{xy} \in E_G(x,y)$ and $e_{uv} \in E_G(u,v)$ and $G'$ be the graph
		that is obtained from $G - \{e_{xy},e_{uv}\}$ by adding edges $f = xu$ and 
  $f' = yv$. Then $G'$ is an $r$-graph, $cr(G_s') \leq 1$, and 
		$\mu_{G'}(x,y)\mu_{G'}(u,v) < \mu_G(x,y)\mu_G(u,v)$. Therefore,
		$G'$ has a $(t,r)$-PM $\ca M'$ by induction hypothesis. 
For every $e \in E(G')$ let $\ca M'(e)$ be the multiset of
the $t$ perfect matchings of $\ca M'$ which contain $e$.
If $M \in \ca M'$ contains $f$ and $f'$, then 
$M \in \ca M'(f) \cap \ca M'(f')$ and  
$M$ induces a perfect matching on $G$ which contains $e_{xy}$ and $e_{uv}$.
Thus, if $\ca M'(f) = \ca M'(f')$, then 
$\ca M'$ induces a $(t,r)$-PM $\ca M$ of $G$ and the statement is proved.
Hence, we assume $\ca M'(f) \not = \ca M'(f')$. Let $M_1, \dots, M_l$ be the elements
of $\ca M'(f)$ with $M_i \not \in \ca M'(f')$ and 
let $M_1', \dots, M_{l'}'$ be the elements
of $\ca M'(f')$ with $M_j' \not \in \ca M'(f)$. 
Note that $l = l'$. 
For $i \in \{1, \dots, l\}$,
consider the pairs $(M_i, M_i')$, which we call conflicting pairs.
Let $G'' = G' - \{f,f'\}$ ($=G- \{e_{xy},e_{uv}\}$)
and let $\ca M''$ be the restriction of $\ca M'$ to $G''$. 
Let $(M,M')$ be a conflicting pair.  
Every $(M,M')$-Kempe-chain in $G''$ contains no or two vertices of $x,y,u,v$. 
We call $(M,M')$ $G$-{\em extendable} if the restrictions of $M$ and $M'$ on $G''$ can be modified by Kempe-switching such that they extend to
two perfect matchings in $G$ (by adding $e_{xy}$ and $e_{uv}$).
Since every perfect matching in $\ca M'(f) \cap \ca M'(f')$ induces a 
perfect matching in $G$, which contains $e_{xy}$ and $e_{uv}$, it remains to prove that every conflicting pair is $G$-extendable.    
Let $(M_1,M_2)$ be a conflicting pair.
Consider the $(M_1,M_2)$-Kempe-chain $P_1$ (in $G''$) starting at $x$ and note that for $i \in \{1,2\}$,
\[(*) \hspace{.5cm} 
 E_{G''}(x,y) \cap M_i = \emptyset \text{ and } E_{G''}(u,v) \cap M_i = \emptyset. \]
If $P_1$ ends in $y$, then there is an $(M_1,M_2)$-Kempe-chain $P_2$ with end vertices $u$ and $v$. 
Since $G'' - (E_{G''}(x,y) \cup E_{G''}(u,v))$ is planar it follows with $(*)$ that $P_1$ and $P_2$ intersect, a contradiction. Consequently, $P_1$ ends in $u$ or in $v$. In both cases it follows 
that $(M_1,M_2)$ is $G$-extendable and the claim is proved. Hence, the $(t,r)$-PM $\ca M'$ on $G'$ induces a $(t,r)$-PM $\ca M$ on $G$ and the statement is proved. 
 
 \subsection*{Acknowledgements}

 \noindent  Yulai Ma and  Isaak H. Wolf were partially supported by Deutsche Forschungsgemeinschaft(DFG, German Research Foundation)-445863039. Junxue Zhang was partially supported by the China Postdoctoral Science Foundation (No. 2024M764113).

	\bibliography{Lit_reg_graph}{}
	\addcontentsline{toc}{section}{References}
	\bibliographystyle{abbrv}
	
\end{document}